\newcommand{\diagdots}[3][-25]{%
\rotatebox{#1}{\makebox[0pt]{\makebox[#2]{\xleaders\hbox{$\cdot$\hskip#3}\hfill\kern0pt}}}%
}
\tikzstyle{vertex}=[circle, draw, inner sep=0pt, minimum size=6pt]
\tikzstyle{startstop} = [rectangle, rounded corners, minimum width=3cm, minimum height=0.6cm,text centered, draw=black, fill=blue!20]
\tikzstyle{process} = [rectangle, rounded corners, minimum width=3cm, minimum height=0.6cm, text centered, draw=black, fill=blue!20]
\tikzstyle{errest} = [rectangle, rounded corners, minimum width=3cm, minimum height=0.6cm, text centered, draw=black, fill=blue!20]
\tikzstyle{arrow} = [thick,->,>=stealth]
\newcounter{mymac@matlab}
\newcommand{\matlab}{MATLAB%
\ifnum\value{mymac@matlab}<1%
\textsuperscript{\textregistered}%
\setcounter{mymac@matlab}{1}%
\fi%
}
\theoremstyle{plain}
\newtheorem{theorem}{Theorem}[section]
\newtheorem{lemma}[theorem]{Lemma}
\newtheorem{corollary}[theorem]{Corollary}
\newtheorem{proposition}[theorem]{Proposition}
\theoremstyle{definition}
\newtheorem{definition}[theorem]{Definition}
\theoremstyle{remark}
\begin{document}


\title{An adaptive scheme for the optimization of damping positions by decoupling controllability spaces in vibrational systems}

\author[$(\ast)~1$]{J. Przybilla}
\author[$(\ast)~(\ast\ast)~2$]{M. Ugrica Vukojević}
\author[$(\ast\ast)~3$]{N. Truhar}
\author[$(\ast)~4$]{P. Benner}
\affil[$(\ast)$]{Max Planck Institute for Dynamics of Complex Technical Systems, Sandtorstra{\ss}e 1, 39106 Magdeburg, Germany.}
\affil[$(\ast \ast)$]{Department of Mathematics,
University Josip Juraj Strossmayer, Trg Ljudevita Gaja 6, 31000 Osijek, Croatia, }
\affil[1]{\email{przybilla@mpi-magdeburg.mpg.de}, \orcid{0000-0002-8703-8735}}
\affil[2]{\email{ugrica@mpi-magdeburg.mpg.de}, \orcid{0000-0002-8585-8967}}
\affil[3]{\email{ntruhar@mathos.hr}, \orcid{0000-0002-0848-8309}}
\affil[4]{\email{benner@mpi-magdeburg.mpg.de}, \orcid{0000-0003-3362-4103}}

\shorttitle{An adaptive scheme for the optimization of damping positions by decoupling controllability spaces in vibrational systems}
\shortauthor{J. Przybilla, M. Ugrica Vukojević, N. Truhar, P. Benner}


\abstract{
In this work, the problem of optimizing damper positions in vibrational systems is investigated.
The objective is to determine the positions of external dampers in such a way that the influence of the input on the output is minimized.
The energy response serves as an optimization criterion, whose computation involves solving Lyapunov equations.
Hence, in order to find the best positions, many of these equations need to be solved, and so the minimization process can have a high computational cost.
\\[5pt]
To accelerate the process of finding the optimal positions, we propose a new reduction method.
Our algorithm generates a basis spanning an approximation to the solution space of the Lyapunov equations for all possible positions of the dampers.
We derive an adaptive scheme that generates the reduced solution space by adding the subspaces of interest, and then we define the corresponding reduced optimization problem that is solvable in a reasonable amount of time.
We decouple the solution spaces of the problem to obtain a space that corresponds to the system without external dampers and serves as a starting point for the reduction of the optimization problem.
In addition, we derive spaces corresponding to the different damper positions that are used to expand the reduced basis if needed.
To evaluate the quality of the basis, we introduce an error indicator based on the space decomposition.
Our new technique produces a reduced optimization problem of significantly smaller dimension that is faster to solve than the original problem, which we illustrate with some numerical examples.
}
\novelty{We propose a new method that reduces the optimization problem of finding the suboptimal damping positions in mechanical systems.
For that, we invent a procedure that decouples the controllability spaces into parameter-independent and parameter-dependent subspaces.
We derive an adaptive reduced basis method that uses these subspaces to define a reduced optimization problem and hence accelerates the optimization process.
Also a new error indicator is invented that is used to evaluate the quality of the approximations.
The performance of our new method is illustrated in several examples.}

\maketitle


\section{Introduction}\label{sec:Intro}
Structures such as buildings, bridges or damps are often disturbed by external forces, e.g. wind occurrences, earthquakes or pedestrian movements. These disturbances can cause vibrations, deflection or even damages in the constructions which can be prevented by adding external dampers.
This paper is about determining the best positions for dampers in order to minimize the effects of external forces on the structure.
We consider vibrational systems of the form
\begin{align}\label{eq:SOsys}
\begin{split}
\bM\ddot{\bx}(t) + \bD(c, g)\dot{\bx}(t) + \bK\bx(t) &= \bB\bu(t),\\
\by(t) &= \bC \bx(t),
\end{split}
\end{align}
where $\bM,~\bD(c, g),~\bK\in\Rnn$ are the mass matrix, the damping matrix, and the stiffness matrix, respectively,
and where the damping matrix depends on parameters $c\in\bcD_{c}$ and $g\in\bcD_{g}$.
Moreover, the matrix $\bB\in\Rnm$ is the input matrix and $\bC\in\Rpn$ is the output matrix.
The vectors $\bu(t)\in\Rm$, $\bx(t)\in\Rn$ and $\by(t)\in\Rp$ are respectively representing the input, the state and the output of the system.
Naturally, the matrices $\bM$, and $\bK$ are symmetric and positive definite, while $\bD(c, g)$ is symmetric and generally positive semi definite matrix.

In our context, the damping matrix $\bD(c,g)$ is symmetric, positive definite, consisting of two parts: a parameter-independent internal damping $\bD_{\intern}$ which is positive definite and a parameter-dependent external damping $\bD_{\extern}(c,g)$, i.e.,
\[
\bD(c,g) = \bD_{\intern} + \bD_{\extern}(c,g).
\]

The external damping $\bD_{\extern}(c,g)$ depends on two types of parameters.
The first ones are the damping gains
$g = \begin{bmatrix}
g_1,\dots , g_{\ell}
\end{bmatrix}\in\bcD_g\subset\R^{\ell}_+$,
which represent the viscosities of the dampers.
We assume that the viscosities $g_j$ are fixed over time and lie in given intervals $[g_j^-, \, g_j^+]$, for all $j = 1,\dots, \ell$.
We collect these bounds by setting $g\in\bcD_g$, where the parameter set $\bcD_g$ contains all given conditions.
The second set of parameters consists of the damping positions $c\in\bcD_c\subset\{1,\dots,n\}^{\ell}$ which are stored in the matrix $\bF(c)\in\R^{n\times\ell}$.
The structure of the matrix $\bF(c)$ depends on the damper type so that e.g. grounded dampers are described by unit vectors that are concatenated to build the matrix $\bF(c)$.
This is described in more detail for the different numerical examples.
The resulting external damper is then given as
\begin{align*}
\bD_{\extern}(c, g):=\bF(c)\bG(g)\bF(c)^\T,\qquad \bG(g) := \diag{g_1, \dots ,g_{\ell}}.
\end{align*}
In addition to the external dampers, the system is internally damped.
There are several different models for internal damping.
In this work we use a small multiple of the critical damping that is
\begin{align}\label{eq:intDamp}
\bD_{\intern} := 2\alpha \bM^{\frac{1}{2}}\left(\bM^{-\frac{1}{2}}\bK\bM^{-\frac{1}{2}}\right)^{\frac{1}{2}}\bM^{\frac{1}{2}},
\end{align}
for $\alpha\ll 1$, which is a widely used convention and has been applied, for example, in \cite{morBenTT11, morBenTT13}.
However, our theory can be applied to all modal dampings, which includes, for example, Rayleigh damping defined in \cite{Wil04,KuzTT12}.
We assume that the number of external dampers $\ell$ is significantly smaller than the dimension $n$.
Because of this structure, these systems are also asymptotically stable, i.e. all eigenvalues $\lambda(c,g)$ of the polynomial eigenvalue problem $(\lambda(c,g)^2\bM + \lambda(c,g)\bD(c,g) + \bK)x(c,g) = 0$ have a negative real part.

In order to simplify the computations in this work, we consider a system transformation, that was introduced, e.g. in \cite{TruV07}, where it is shown that there exists a transformation $\bPhi$ such that
\begin{align*}
\bPhi^\T \bM\bPhi = \bI_n,\qquad \bPhi^\T \bK\bPhi = \bOmega^2 = \diag{\omega_1^2,\dots, \omega_n^2}, \qquad\bPhi^\T \bD_\intern\bPhi = 2\alpha \bOmega,
\end{align*}
i.e. the matrix $\bPhi$ simultaneously diagonalizes the mass matrix, the stiffness matrix and the internal damping where the transformed mass matrix is the identity matrix.
The values $\omega_1, \dots, \omega_{n}$ are the eigenvalues of the undamped system and are called \emph{eigenfrequencies}, see \cite{TruV09}.
The transformed external damping matrix consists of low-rank factors \[\bPhi^\T \bD_{\extern}(c, g)\bPhi = \widetilde{\bF}(c) \bG(g)\widetilde{\bF}(c)^\T,\qquad\widetilde{\bF}(c): =\bPhi^\T \bF(c).\]
We summarize the transformed damping matrix by defining $\widetilde{\bD}(c, g):= 2\alpha \bOmega + \widetilde{\bF}(c) \bG(g)\widetilde{\bF}(c)^\T$.
Moreover, we define $\widetilde{\bB}:=\bPhi^\T \bB$ and $\widetilde{\bC}:= \bC\bPhi$ to obtain the transformed system
\begin{align}\label{eq:SOsysPhi}
\begin{split}
\ddot{\widetilde{\bx}}(t) + \widetilde{\bD}(c, g)\dot{\widetilde{\bx}}(t) + \bOmega^2\widetilde{\bx}(t) &= \widetilde{\bB}\bu(t),\\
\by(t) &= \widetilde{\bC} \widetilde{\bx}(t),
\end{split}
\end{align}
which is equivalent to the original system \eqref{eq:SOsys} in the sense that both systems have the same input-output behavior.

Vibrational systems and the optimization of their contained dampers have been studied in a large number of papers in the last decades, for example in \cite{Bea96,DuX10,Gen09,Inm06,KarVL94,Ves11,MulW21,LiuRZ20,JakMTetal22}, where external dampers are considered in systems that already contain internal damping of small magnitude.
The aim is to choose external dampers that stabilize the system and shift eigenfrequencies in such a way that possible external loads do not lead to resonances.
There are two variables that can be optimized: the damper positions and the damping gains.
Moreover, depending on the application, different criteria are chosen to quantify the stability of systems and the response to external disturbances.
If homogeneous systems are considered, i.e. systems \eqref{eq:SOsys} with $\bB\equiv 0$, then the spectral abscissa or the total average energy are used. Examples are described in \cite{morNak03,FreL99,morBenTT13,morTruTP17,TruV09}.\\
In the non-homogeneous case, i.e. $\bB\neq 0$, as considered in this work, external disturbances are taken into account, which potentially play an important role in real-life scenarios.
In these cases, the average displacement amplitude can be evaluated, which minimizes the square of the norm of the displacement $\widetilde{\bx}(t)$ averaged over a certain time period, see \cite{KuzTT16,TruTV15}.
Another criterion, which is used in this work, is the average energy amplitude corresponding to the minimization of the \emph{system response}, i.e.
\begin{align*}
\bJ(c,g):=\|\mathcal{\bcG}(\cdot;c,g)\|_{\mathcal{H}_2} = \left( \frac{1}{2\pi} \int_{-\infty}^{\infty} \mathrm{trace}(\mathcal{\bcG}(\mathrm{i}w;c,g)^{\rmH}\mathcal{\bcG}(\mathrm{i}w;c,g))\mathrm{d}w \right)^{\frac{1}{2}},
\end{align*}
where $\bcG(s; c, g):=\widetilde{\bC}(s^2I + s\widetilde{\bD}(c, g) + \bOmega^2)^{-1}\widetilde{\bB}$ is the \emph{transfer function} of the system describing the input-to-output behavior in the frequency domain.
This optimization criterion was also used in \cite{morBenKTetal16,morTomBG18}.
We choose this particular criterion since we want to minimize the maximum deflections, more precisely, the maximum time response $\max_{t\geq0}\|\by(t)\|_{\infty}$, and therefore consider the $L_{\infty}$-norm of the output that satisfies the bound
\[
\|\by\|_{L_{\infty}}\leq \|\bcG(\cdot; c, g)\|_{\cH_2}\| \bu\|_{L_2}.
\]
Alternatively, in \cite{BlaCGetal12,morTomV20,morPrzPB23}, the authors optimize the $\cH_{\infty}$-norm of the systems constraining the $L_2$ norm of the output $\by$ of the corresponding system, which can be interpreted as the worst-case amplification of the energy in the performance output by an input signal.

Most of the established methods consider the optimization of the damping gains.
The optimization of the discrete damper positions is still a difficult problem, especially for large systems, which has been studied in \cite{GurM92,Tak98,DymD21,morTruTP17a}.
In particular, in \cite{morBenTT11,morTom11} the authors describe the optimization using a discrete-to-continuous approach, which is modified and used in this paper.

In order to simplify the computation of the system response $\bJ(c,g)$, we transform the second-order system \eqref{eq:SOsysPhi} into a first-order system
\begin{align}\label{eq:FOsys}
\begin{split}
\dot{\bz}(t) &= \bcA(c,g)\bz(t) + \bcB \bu(t),\\
\by(t) &= \bcC \bz(t)
\end{split}
\end{align}
with corresponding matrices
\begin{align}\label{eq:FOMatr}
\bcA(c,g):=\begin{bmatrix}
0 & \bI_n \\
-\bOmega^2 & -\widetilde{\bD}(c,g)
\end{bmatrix}, \quad \bcB := \begin{bmatrix}
0\\
\widetilde{\bB}
\end{bmatrix}, \quad \bcC := \begin{bmatrix}
\widetilde{\bC} & 0
\end{bmatrix}\;\text{  and  }\; \bz(t) := \begin{bmatrix}
\widetilde{\bx}(t)\\
\dot{\widetilde{\bx}}(t)
\end{bmatrix}.
\end{align}
As described in \cite{ZhoDG96}, the $\cH_2$-norm of the transfer function can be computed as
\begin{align}\label{eq:SysResTr}
\bJ(c,g)=\sqrt{\trace{\bcC \bcP(c,g) \bcC^\T}},
\end{align}
where the matrix $\bcP(c,g)$ is the \emph{controllability Gramian} of \eqref{eq:FOsys} and solves the Lyapunov equation
\begin{align}\label{eq:Lyap}
\bcA(c,g)\bcP(c,g) + \bcP(c,g)\bcA(c,g)^\T = -\bcB\bcB^\T.
\end{align}

In the following, we determine damping positions $c\in\bcD_c$ and damping gains $g\in\bcD_g$ such that the system response described in \eqref{eq:SysResTr} is locally minimal.
Since the computation of the system response includes solving a Lyapunov equation that is given in \eqref{eq:Lyap}, the optimization process of the external dampers leads to high computational costs or is even unfeasible if the system is of too large dimension.
Hence, we apply some reduction techniques to reduce the optimization problem such that it becomes solvable in a reasonable time.
To that aim, we apply a Galerkin scheme, i.e. we determine bases $\bW, ~\bT\in\R^{n, r}$ with $\bW^{\T}\bT = \bI_r$ that generate the reduced matrices
\begin{align}\label{eq:redMatWT}
\bD_{\rr}(c, g):= \bW^{\T}\widetilde{\bD}(c, g)\bT,\quad \bK_{\rr}:= \bW^{\T}\bOmega^2\bT,\quad \bB_{\rr}:= \bW^{\T}\widetilde{\bB},\quad \bC_{\rr}:= \widetilde{\bC}\bT
\end{align}
and the corresponding reduced system
\begin{align}\label{eq:SOsys_red}
\begin{split}
\ddot{\bx}_{\rr}(t) + \bD_{\rr}(c, g)\dot{\bx}_{\rr}(t) + \bK_{\rr}\bx_{\rr}(t) &= \bB_{\rr}\bu(t),\\
\by_{\rr}(t) &= \bC_{\rr} \bx_{\rr}(t).
\end{split}
\end{align}
The goal is to find matrices $\bW,~\bT$ so that the reduced system \eqref{eq:SOsys_red} approximates the input-to-output behavior of the system \eqref{eq:SOsysPhi}, i.e., the error $\|\by-\by_{\rr}\|$ is small in a suitable norm for all parameters $c\in\bcD_c$ and $g\in\bcD_g$.
Then the external damping can be optimized for system \eqref{eq:SOsys_red} instead of system \eqref{eq:SOsysPhi} which will save computational costs.
The system response is a highly non-convex function, and hence has several local minima.
The aim in this work is to find the same local minimizer as for the full-order model.

The paper is structured as follows:
First, in Section \ref{sec:Preliminaries}, we review two different reduction methods for parameter-independent systems.
Then, in Section \ref{sec:SatSpRep}, we derive a decoupling of the controllability space, which is then used in Section \ref{sec:Redu} to generate the reduced systems.
First, in Subsection \ref{ssec:BT_R}, we generate the projection spaces using the Gramians of the systems spanning the controllability spaces for different external dampers.
Then, in Subsection \ref{ssec:IRKA_R}, the reduced projection spaces are constructed using the iterative rational Krylov algorithm (IRKA) method that was introduced for second-order systems in \cite{morTomBG18}.
In Section \ref{sec:ErrorDet}, we describe an error expression that describes the quality of the controllability space approximation, which can be used to improve the introduced methods.
Finally, in Sections \ref{sec:CompDet} and \ref{sec:NumRes}, we explain some details about the implementation of the methods and the numerical results for some selected examples.

\section{Preliminaries}\label{sec:Preliminaries}

In this section, we consider the reduction of the second-order system \eqref{eq:SOsysPhi} for fixed external dampers described by some $c\in\bcD_c$, $g\in\bcD_g$, and hence, we consider the parameter-independent case, i.e. $\widetilde{\bD}\equiv \widetilde{\bD}(c,g)$ in this section.
Later, in Section \ref{sec:SatSpRep} and the following ones, we will consider the case of several damping parameters $c$ and $g$ that are to optimize.
We review two reduction methods for second-order systems that will be used later in this work to derive a reduced optimization method.
In Subsection \ref{ssec:BT}, we explain balanced truncation for second-order systems, and afterwards, in Subsection \ref{ssec:IRKA}, we describe an IRKA method that is specified for second-order systems.

\subsection{Balanced truncation}\label{ssec:BT}
In this section, we describe balanced truncation suitable for second-order systems as presented in \cite{morMeyS96, morChaLVetal06, morChaGVetal05, morReiS08}.
We assume that the system \eqref{eq:SOsysPhi} has homogeneous initial conditions, i.e. $\widetilde{\bx}(0)=0$ and $\dot{\widetilde{\bx}}(0)=0$.
For the inhomogeneous case, the authors in \cite{morPrzPB22} derive two balanced truncation methods that follow a similar methodology as presented in this section.

First, we derive some input-to-state and state-to-output mappings that are used to derive suitable second-order Gramians.
To do so, we consider the (parameter-independent) transfer function
\[
\bcG(s):=\widetilde{\bC}(s^2 + s\widetilde{\bD} + \bOmega^2)^{-1}\widetilde{\bB}
\]
that we generate by applying the Laplace transform to the second-order system \eqref{eq:SOsysPhi} with zero initial conditions.
From that, we can derive the two mappings that are introduced in the following definition.
\begin{definition}\label{def:SO_mappingsGramians}
The \emph{input-to-state mapping} $\boldsymbol{\cR}_{\SO}$ and the \emph{state-to-output mapping} $\boldsymbol{\cS}_{\SO}$ of the asymptotically stable second-order system \eqref{eq:SOsysPhi} with zero initial conditions are
\begin{align*}
\boldsymbol{\cR}_{\SO}(s) := \left( s^2\bI +s\widetilde{\bD} + \bOmega^2 \right)^{-1}\widetilde{\bB},\qquad
\boldsymbol{\cS}_{\SO}(s) :=\widetilde{\bC}\left( s^2\bI +s\widetilde{\bD} + \bOmega^2\right)^{-1}.
\end{align*}
The corresponding \emph{second-order controllability Gramian} $\bP_{\SO}$ and \emph{observability Gramian} $\bQ_{\SO}$ are defined as

\begin{align}\label{eq:SO_Gram}
\bP_{\SO}:= \frac{1}{2\pi}\int_{-\infty}^{\infty} \boldsymbol{\cR}_{\SO}(iw)\boldsymbol{\cR}_{\SO}(-iw)^{\T}\mathrm{d}w,\qquad
\bQ_{\SO}:=\frac{1}{2\pi}\int_{-\infty}^{\infty} \boldsymbol{\cS}_{\SO}(-iw)^{\T}\boldsymbol{\cS}_{\SO}(iw)\mathrm{d}w.
\end{align}
\end{definition}
The matrix $\bP_{\SO}$ is also called \emph{position controllability Gramian} and $\bQ_{\SO}$ is called \emph{velocity observability Gramian}.
We want to emphasize that $\bP_{\SO}$ spans the controllability space and $\bQ_{\SO}$ the observability space of system \eqref{eq:SOsysPhi}.
In order to determine the second-order controllability Gramian and observability Gramian of system \eqref{eq:SOsysPhi}, we define the controllability Gramian and the observability Gramian of the respective first-order system \eqref{eq:FOsys} that are
\begin{subequations}\label{eq:FO_Gram}
\begin{align}
\boldsymbol{\cP}:=&{}\begin{bmatrix}
\bP_1 & \bP_2 \\
\bP_2^{\T} & \bP_3
\end{bmatrix} := \frac{1}{2\pi}\int_{-\infty}^{\infty}(iw\bI-\boldsymbol{\cA})^{-1}\boldsymbol{\cB}\boldsymbol{\cB}^{\T}(-iw\bI-\boldsymbol{\cA})^{-\T}\mathrm{d}w,\label{eq:FO_Gram_a} \\
\boldsymbol{\cQ}:=&{}\begin{bmatrix}
\bQ_1 & \bQ_2\\\
\bQ_2^{\T} & \bQ_3
\end{bmatrix} := \frac{1}{2\pi}\int_{-\infty}^{\infty}(-iw\bI-\boldsymbol{\cA})^{-\T}\boldsymbol{\cC}^{\T}\boldsymbol{\cC}(iw\bI-\boldsymbol{\cA})^{-1}\mathrm{d}w.\label{eq:FO_Gram_b}
\end{align}
\end{subequations}
The first order Gramians are computed by solving the Lyapunov equations
\[
 \bcA\boldsymbol{\cP} + \boldsymbol{\cP}\bcA^{\T} = -\bcB\bcB^{\T}, \qquad
  \bcA^{\T}\boldsymbol{\cQ} + \boldsymbol{\cQ}\bcA = -\bcC^{\T}\bcC.
\]
Based on these first-order Gramians, the second-order Gramians can be computed as described in the following proposition from \cite{morPrzPB22,morReiS07}.
\begin{proposition}\label{prop:SO_Gram}
Consider a second-order system as described in \eqref{eq:SOsysPhi}, that is asymptotically stable.
The corresponding second-order controllability Gramian $\bP_{\SO}$ and observability Gramian $\bQ_{\SO}$ as defined in \eqref{eq:SO_Gram} satisfy
\[
\bP_{\SO} = \bP_1 \qquad \text{ and } \qquad \bQ_{\SO} = \bQ_3,
\]
where $\bP_1$ is the upper-left block of the first-order controllability Gramian $\bcP$, and $\bQ_3$ is the lower-right block of the first-order observability Gramian $\bcQ$ as defined in \eqref{eq:FO_Gram}.
\end{proposition}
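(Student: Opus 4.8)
The plan is to reduce the statement to an explicit description of how the first-order resolvent $(iw\bI-\boldsymbol{\cA})^{-1}$ acts on $\boldsymbol{\cB}$ on the right and on $\boldsymbol{\cC}$ on the left, because the definitions \eqref{eq:FO_Gram} present $\boldsymbol{\cP}$ and $\boldsymbol{\cQ}$ as frequency integrals assembled exactly from these objects evaluated at $s=\pm iw$. Since $\bP_1$ is the $(1,1)$ block of $\boldsymbol{\cP}$ and $\bQ_3$ is the $(2,2)$ block of $\boldsymbol{\cQ}$, I only need the top block of $(iw\bI-\boldsymbol{\cA})^{-1}\boldsymbol{\cB}$ and the bottom block of $\boldsymbol{\cC}(iw\bI-\boldsymbol{\cA})^{-1}$; the remaining blocks never enter the claim.

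First I would extract those two blocks by solving linear systems instead of inverting the $2n\times 2n$ pencil. Writing $(iw\bI-\boldsymbol{\cA})^{-1}\boldsymbol{\cB}=\begin{bmatrix}\boldsymbol{v}_1\\\boldsymbol{v}_2\end{bmatrix}$ and reading off the block form of $\boldsymbol{\cA},\boldsymbol{\cB}$ from \eqref{eq:FOMatr}, the relation $(iw\bI-\boldsymbol{\cA})\begin{bmatrix}\boldsymbol{v}_1\\\boldsymbol{v}_2\end{bmatrix}=\boldsymbol{\cB}$ becomes $iw\,\boldsymbol{v}_1-\boldsymbol{v}_2=0$ and $\bOmega^2\boldsymbol{v}_1+(iw\bI+\widetilde{\bD})\boldsymbol{v}_2=\widetilde{\bB}$. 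Eliminating $\boldsymbol{v}_2=iw\,\boldsymbol{v}_1$ gives $\big((iw)^2\bI+iw\widetilde{\bD}+\bOmega^2\big)\boldsymbol{v}_1=\widetilde{\bB}$, hence $\boldsymbol{v}_1=\boldsymbol{\cR}_{\SO}(iw)$. The transposed computation for the row block $\boldsymbol{\cC}(iw\bI-\boldsymbol{\cA})^{-1}=\begin{bmatrix}\boldsymbol{u}_1&\boldsymbol{u}_2\end{bmatrix}$ yields, in the same way, $\boldsymbol{u}_2=\boldsymbol{\cS}_{\SO}(iw)$ for the bottom block. The one point requiring justification is that $(iw)^2\bI+iw\widetilde{\bD}+\bOmega^2$ is invertible for every real $w$; this holds since $\det(iw\bI-\boldsymbol{\cA})=\det\big((iw)^2\bI+iw\widetilde{\bD}+\bOmega^2\big)$ and asymptotic stability of \eqref{eq:SOsysPhi} forbids eigenvalues of $\boldsymbol{\cA}$ on the imaginary axis.

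Finally I would assemble the Gramians block by block. In \eqref{eq:FO_Gram_a} the top block of $\boldsymbol{\cB}^{\T}(-iw\bI-\boldsymbol{\cA})^{-\T}=\big((-iw\bI-\boldsymbol{\cA})^{-1}\boldsymbol{\cB}\big)^{\T}$ is $\boldsymbol{\cR}_{\SO}(-iw)^{\T}$, so the $(1,1)$ block of the integrand is $\boldsymbol{\cR}_{\SO}(iw)\boldsymbol{\cR}_{\SO}(-iw)^{\T}$; integrating and comparing with \eqref{eq:SO_Gram} gives $\bP_1=\bP_{\SO}$. Symmetrically, in \eqref{eq:FO_Gram_b} the bottom block of $(-iw\bI-\boldsymbol{\cA})^{-\T}\boldsymbol{\cC}^{\T}$ is $\boldsymbol{\cS}_{\SO}(-iw)^{\T}$, so the $(2,2)$ block of the integrand is $\boldsymbol{\cS}_{\SO}(-iw)^{\T}\boldsymbol{\cS}_{\SO}(iw)$, which integrates to $\bQ_3=\bQ_{\SO}$. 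I expect the main obstacle to be purely organizational: keeping the block positions and the transposes consistent so that the factors land in the slots dictated by \eqref{eq:SO_Gram}, and observing that splitting the matrix-valued integral into blocks is legitimate because the integrand decays like $w^{-2}$ (again a consequence of asymptotic stability), so each block integral converges on its own.
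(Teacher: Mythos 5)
Your proof is correct. Note that the paper itself does not prove Proposition~\ref{prop:SO_Gram} --- it imports the result from the cited references \cite{morPrzPB22,morReiS07} --- and your block-resolvent computation is exactly the standard argument behind that citation: eliminating $\boldsymbol{v}_2 = iw\,\boldsymbol{v}_1$ correctly identifies the top block of $(iw\bI-\bcA)^{-1}\bcB$ with $\boldsymbol{\cR}_{\SO}(iw)$, the transposed elimination identifies the bottom block of $\bcC(iw\bI-\bcA)^{-1}$ with $\boldsymbol{\cS}_{\SO}(iw)$, and your two side remarks (the identity $\det(s\bI-\bcA)=\det(s^2\bI+s\widetilde{\bD}+\bOmega^2)$ combined with asymptotic stability to justify invertibility on the imaginary axis, and the $O(w^{-2})$ decay to justify blockwise integration) close the only gaps that a careless version of this argument would leave open.
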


In order to derive projection matrices that reduce the system matrices as described in \eqref{eq:redMatWT}, we derive some low-rank factors or Cholesky factors $\bR_1$ and $\bS_3$ with $\bP_1 = \bR_1\bR_1^{\T}$ and $\bQ_3 = \bS_3\bS_3^{\T}$.
Those factors exist since $\bP_1$ and $\bQ_3$ are symmetric and positive semi-definite matrices.
They are then used to compute the singular value decomposition
$
\bS_3^{\T}\bR_1 = \bU\boldsymbol{\Sigma}\bX^{\T}
$
and to derive the balancing and truncating projection matrices that are
\begin{equation}\label{eq:SOprojection_matrices}
\bW := \bS_3 \bU_{\rr} \boldsymbol{\Sigma}_{\rr}^{-\frac{1}{2}}, \qquad \bT := \bR_1 \bX_{\rr}\boldsymbol{\Sigma}_{\rr}^{-\frac{1}{2}},
\end{equation}
where $\boldsymbol{\Sigma}_{\rr}$ is the diagonal matrix containing the $r$ largest singular values of $\boldsymbol{\Sigma}$.
Moreover, $\bU_{\rr}$ and $\bX_{\rr}$ include the $r$ leading columns of $\bU$ and $\bX$.
Projecting by $\bW$ and $\bT$ provides the reduced system \eqref{eq:SOsys_red}.

\subsection{Iterative rational Krylov method}\label{ssec:IRKA}
In this section, we briefly present the iterative rational Krylov method (IRKA) suitable for the case of second-order systems, as introduced in \cite{morTomBG18}, where we aim to find a reduced system as in \eqref{eq:SOsys_red} with a transfer function
\begin{align}\label{eq:SO_TF}
\bcG_{\rr}(s):=\bC_{\rr}(s^2\bM_{\rr} + s\bD_{\rr} +\bK_{\rr})^{-1}\bB_{\rr}
\end{align} and matrices as in \eqref{eq:redMatWT}.
Within the IRKA approach, we determine a reduced system that satisfies the left, right and bi-tangential interpolation conditions
\begin{align}\label{eq:interpol_cond}
\bcG(-s_i)b_i &= \bcG_{\rr}(-s_i)b_i, \qquad
c_i^{\rmH}\bcG(-s_i) = c_i^{\rmH}\bcG_{\rr}(-s_i), \qquad
c_i^{\rmH}\bcG'(-s_i)b_i = c_i^{\rmH}\bcG_{\rr}'(-s_i)b_i
\end{align}
for interpolation points $s_1, \dots, s_r$, and tangential directions $b_{1}, \dots, b_{r}\in\R^{m}$ and $c_{1}, \dots, c_{r}\in\R^{q}$.
The transfer function $\bcG_{\rr}(s)$ can also be written in terms of their poles $\lambda_k$ and residues $\widetilde{c}_k\widetilde{b}_k^{\T}$ as
\begin{align}\label{eq:G_pole}
\bcG_{\rr}(s) = \sum_{k=1}^{r} \frac{\widetilde{c}_k\widetilde{b}_k^{\T}}{s-\lambda_k}.
\end{align}
The authors in \cite{morGugAB08} show that if a transfer function $\bcG_{\rr}$ of the form in \eqref{eq:G_pole} is a local minimizer of
\begin{align}\label{eq:H2opti}
\|\bcG(\cdot)-\bcG_{\rr}(\cdot)\|_{\cH_2} = \min_{\substack{\dim(\widehat{\bcG}(\cdot)) = r \\ \widehat{\bcG}(\cdot) \text{ is stable }}} \|\bcG(\cdot)-\widehat{\bcG}(\cdot)\|_{\cH_2},
\end{align}
then $\bcG_{\rr}$ is a tangential Hermite interpolant of $\bcG$ at $-\lambda_k$, $\widetilde{c}_k$, and $\widetilde{b}_k$, $k=1,\dots,r$, i.e. the conditions in \eqref{eq:interpol_cond} are satisfied for these interpolation points and tangential directions.
The IRKA method applies a Newton method that derives in each iteration new interpolation points and tangential directions that converge to the optimal ones, and define a reduced system with optimal transfer function $\bcG_{\rr}$.
%

Defining and computing the projecting matrices $\bW$ and $\bT$ that define the reduced matrices \eqref{eq:redMatWT} and the system \eqref{eq:SOsys_red} that fulfill the properties in \eqref{eq:interpol_cond} was done in \cite{morAntBG10,morGugAB08}.
However, the choice of the projecting bases also depends on additional conditions that are applied to the reduced systems.
For mechanical systems \eqref{eq:SOsysPhi}, the aim is to find bases that preserve the symmetry and the positive definiteness of the mass matrix, the damping matrix, and the stiffness matrix in order to obtain an asymptotically stable reduced system. Hence, we set $\bW = \bT$.
Also, the reduced system is supposed to be of second-order structure.
Hence, the sym2IRKA method, presented in \cite{morTomBG18}, generates a reduced transfer function of the structure shown in \eqref{eq:SO_TF} that represents a second-order system as in \eqref{eq:SOsys_red}.
For that it uses a one-sided projection approach that generates a basis matrix $\bW$ with
\begin{align*}
\range{\bW}\supseteq\myspan{(s_1^2\bI_{n} + s_1\widetilde{\bD} + \bOmega^2)^{-1}\widetilde{\bB} b_1,\dots, (s_r^2\bI_{n} + s_r\widetilde{\bD} + \bOmega^2)^{-1}\widetilde{\bB} b_{r}},
\end{align*}
for interpolation points $s_1, \dots, s_{r}$ and directions $b_1, \dots, b_{r}$ and defines the matrices \eqref{eq:redMatWT} for $\bT=\bW$.
The resulting reduced matrices generate a transfer function of the form \eqref{eq:SO_TF} that interpolates the original transfer function $\bcG$ using $r$ interpolation points and tangential direction.
Within the sym2IRKA method those are updated in each step to converge towards the optimal ones.

This procedure results in Algorithm \ref{algo:IRKA_SO}.
After a basis $\bW$ is built in Step \ref{StepW}, the reduced matrices are defined in Step \ref{StepMatr} which define a reduced second-order system with a transfer function of order $2r$.
Since this order is twice the dimension we aim for, an internal truncation step is applied.
By applying a second IRKA method to the first-order representation of the reduced system corresponding to the reduced matrices, we obtain a system of dimension $r$ with a transfer function $\bcG_{\rr}$.
That way, we obtain the transfer function $\bcG_{\rr}(s)$ of the form \eqref{eq:G_pole}.
We solve the resulting reduced eigenvalue problem to obtain the interpolation points and tangential directions.
Those are then used to derive the basis $\bW$ that provides for the consecutive reduced system.

\begin{algorithm}[tb]
\caption{sym2IRKA \cite{morTomBG18}}
\label{algo:IRKA_SO}
\begin{algorithmic}[1]
\Require{The original system \eqref{eq:SOsysPhi}, maximum number of iterations $N_{\max}$.}
\Ensure{A reduced system \eqref{eq:SOsys_red} that satisfies \eqref{eq:H2opti}.}
\State Choose initial expansion points $\bcS=\{s_1, \dots, s_{r}\}$ and right tangential directions $b_1, \dots, b_{r}$.
\While{iteartion number $\leq N_{\max}$ and $\bcS$ did not converge}
\State Set	
\[
\bW = \begin{bmatrix}(s_1^2\bI_{n} + s_1\widetilde{\bD} + \bOmega^2)^{-1}\widetilde{\bB} b_1 & \dots &  (s_r^2\bI_{n} + s_r\widetilde{\bD} + \bOmega^2)^{-1}\widetilde{\bB} b_{r}
\end{bmatrix}.
\]\label{StepW}
\State Determine reduced matrices
\[
\bD_{\rr}:=  \bW^{\T}\widetilde{\bD}\bW, \quad
\bK_{\rr}:=  \bW^{\T}\bOmega^2\bW,  \quad
\bB_{\rr}:= \bW^{\T}\widetilde{\bB}, \quad
\bC_{\rr}:=  \widetilde{\bC}\bW.
\] \label{StepMatr}
\State Reduce the resulting first-order system with matrices \eqref{eq:FOMatr} to an order $r$ system with a transfer function
$
\bcG_{\rr}(s)=\sum_{k=1}^{r} \frac{c_kb_k^{\rmH}}{s-\lambda_k},
$ and determine new interpolation points and tangential directions as described in \cite{morTomBG18}.\label{StepRed}
\EndWhile
\State Determine reduced matrices
\[
\bD_{\rr}:=  \bW^{\T}\widetilde{\bD}\bW, \quad
\bK_{\rr}:=  \bW^{\T}\bOmega^2\bW,  \quad
\bB_{\rr}:= \bW^{\T}\widetilde{\bB}, \quad
\bC_{\rr}:=  \widetilde{\bC}\bW.
\]
\end{algorithmic}
\end{algorithm}

\section{Decomposition of the controllability space }\label{sec:SatSpRep}
We consider the system response in \eqref{eq:SysResTr} and insert the definition of $\bcC$ to obtain
\[
\bJ(c,g) = \sqrt{\trace{\widetilde{\bC}\bP_{\SO}\widetilde{\bC}^{\T}}},
\]
where $\bP_{\SO}$ is the position controllability Gramian from \eqref{eq:SO_Gram} that spans the controllability space of the second-order system \eqref{eq:SOsysPhi}.
Our aim in this article is to find a basis $\bV_{\rr}$ that spans an approximation of this controllability space for all possible parameters $c\in\bcD_c$ and  $g\in\bcD_g$.
The space that contains the controllability space of system \eqref{eq:SOsysPhi} for a parameter $(c,g)\in\bcD$ is denoted by $\bcV(c,g)$ and the space that contains the controllability space for all parameters, that we aim to approximate, is denoted by $\bcV$ in the following.
For a given damping $(c,g)\in\bcD$, the controllability space $\bcV(c,g)$ of the system is spanned by
\begin{align}\label{eq:Vcg}
\bcV(c,g)
=\myspan{(s_1^2\bI_{n} + s_1\widetilde{\bD}(c,g) + \bOmega^2)^{-1}\widetilde{\bB}b_1, \dots,(s_N^2\bI + s_N\widetilde{\bD}(c,g) + \bOmega^2)^{-1}\widetilde{\bB}b_N}
\end{align}
if the parameters $s_1, \dots, s_N$ and tangential directions $b_1, \dots, b_N$ are chosen well and $N$ is chosen large enough.


In this section, we introduce a decomposition of the controllability space into a parameter-independent space and a parameter-dependent one which can also be applied to the observability space.
Using this decomposition, we can simplify the computations within the reduction process and define an error indicator in a following section.
The following theorem provides a decomposition of the controllability space $\bcV(c,g)$ into a parameter-independent subspace and several parameter-dependent ones that are used in the rest of this work.
\begin{theorem}\label{theo:decV}
Consider the asymptotically stable second-order system \eqref{eq:SOsysPhi} for a fixed parameter pair $(c,g)\in\bcD$.
Let $\bcV(c,g)$ be the controllability space of this system that satisfies \eqref{eq:Vcg} for a given set of interpolation points $s_1, \dots, s_N$ and directions $b_1, \dots, b_N$.
Define $\bLambda(s):=(s^2\bI_n + s 2\alpha\bOmega + \bOmega^2)^{-1}$, then the space $\bcV(c,g)$ satisfies
\begin{align}\label{eq:DecContrSpace}
\bcV(c,g)\subseteq \bcV_0 \cup \bcV_{\bH}(c, g) \subseteq \bcV_0 \cup \bcV_{\bF}(c)
\end{align}
for subspaces
\begin{align}\label{eq:Vspace_def}
\begin{split}
\bcV_0 :={}& \myspan{
\bLambda(s_1)\widetilde{\bB}b_1, \dots ,\bLambda(s_N)\widetilde{\bB}b_N
}, \\
\bcV_{\bH}(c, g) :={}& \myspan{
\bLambda(s_1)\widetilde{\bF}(c)\bH(s_1, b_1; c,g) , \dots ,\bLambda(s_N)\widetilde{\bF}(c)\bH(s_N, b_N; c,g)
},\\
\bcV_{\bF}(c) :={}&\myspan{
\bLambda(s_1)\widetilde{\bF}(c) , \dots ,\bLambda(s_N)\widetilde{\bF}(c)
},
\end{split}
\end{align}
and $\;\bH(s_i, b_i; c,g) := \left( \frac{1}{s_i}\bG(g)^{-1}+\widetilde{\bF}(c)^{\T}\bLambda(s_i)\widetilde{\bF}(c)\right)^{-1}\widetilde{\bF}(c)^{\T}\bLambda(s_i)\widetilde{\bB}b_i\;$
for $i=1, \dots, N$.
\end{theorem}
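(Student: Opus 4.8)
The plan is to reduce the whole statement to a single application of the Sherman--Morrison--Woodbury identity to the resolvent that generates $\bcV(c,g)$, and then to read off $\bcV_0$ and $\bcV_{\bH}(c,g)$ directly from the two terms of the resulting expansion; the inclusion into $\bcV_0 \cup \bcV_{\bF}(c)$ then comes for free. Throughout I read the symbol $\cup$ between two subspaces as their sum (equivalently, the span of the union), which is the only interpretation under which the asserted inclusions can hold.

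First I would insert the damping splitting $\widetilde{\bD}(c,g) = 2\alpha\bOmega + \widetilde{\bF}(c)\bG(g)\widetilde{\bF}(c)^\T$ into the resolvent matrix of the $i$-th generator, which gives
\[
s^2\bI_n + s\widetilde{\bD}(c,g) + \bOmega^2 = \bLambda(s)^{-1} + s\,\widetilde{\bF}(c)\bG(g)\widetilde{\bF}(c)^\T ,
\]
since $\bLambda(s)^{-1} = s^2\bI_n + 2\alpha s\bOmega + \bOmega^2$ is exactly the parameter-independent factor defined in the theorem. This writes the generator matrix as an invertible parameter-independent matrix plus a parameter-dependent perturbation of rank at most $\ell$.

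Next I would apply the Woodbury identity with $A^{-1} = \bLambda(s)$, $U = \widetilde{\bF}(c)$, $C = s\bG(g)$ and $V = \widetilde{\bF}(c)^\T$, which yields
\[
\left(s^2\bI_n + s\widetilde{\bD}(c,g) + \bOmega^2\right)^{-1} = \bLambda(s) - \bLambda(s)\widetilde{\bF}(c)\left(\tfrac{1}{s}\bG(g)^{-1} + \widetilde{\bF}(c)^\T\bLambda(s)\widetilde{\bF}(c)\right)^{-1}\widetilde{\bF}(c)^\T\bLambda(s).
\]
Multiplying on the right by $\widetilde{\bB}b_i$ and specialising to $s = s_i$ turns the second summand into precisely $-\bLambda(s_i)\widetilde{\bF}(c)\bH(s_i,b_i;c,g)$ with $\bH$ as defined in the statement. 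Hence each generator of $\bcV(c,g)$ equals $\bLambda(s_i)\widetilde{\bB}b_i - \bLambda(s_i)\widetilde{\bF}(c)\bH(s_i,b_i;c,g)$, a sum of a vector in $\bcV_0$ and a vector in $\bcV_{\bH}(c,g)$; taking spans over $i=1,\dots,N$ gives $\bcV(c,g) \subseteq \bcV_0 + \bcV_{\bH}(c,g)$. The second inclusion is then immediate, because each generator $\bLambda(s_i)\widetilde{\bF}(c)\bH(s_i,b_i;c,g)$ of $\bcV_{\bH}(c,g)$ lies in the column space of $\bLambda(s_i)\widetilde{\bF}(c)$, and $\bcV_{\bF}(c)$ is by definition spanned by those columns, so $\bcV_{\bH}(c,g)\subseteq\bcV_{\bF}(c)$ and therefore $\bcV_0 + \bcV_{\bH}(c,g) \subseteq \bcV_0 + \bcV_{\bF}(c)$.

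The only real work is in justifying the invertibility hypotheses that the Woodbury identity needs, and this is the step I would treat carefully. The factor $\bG(g) = \diag{g_1,\dots,g_\ell}$ is invertible since each $g_j > 0$, and $\bLambda(s_i)$ is invertible because the chosen interpolation points avoid the roots of the internally damped pencil $s^2\bI_n + 2\alpha s\bOmega + \bOmega^2$, which lie in the open left half-plane by asymptotic stability. The delicate factor is the capacitance matrix $\tfrac{1}{s_i}\bG(g)^{-1} + \widetilde{\bF}(c)^\T\bLambda(s_i)\widetilde{\bF}(c)$; its invertibility is exactly what makes the identity valid, and I would obtain it for free from the matrix determinant lemma, $\det\!\left(A + UCV\right) = \det(A)\det(C)\det\!\left(C^{-1} + VA^{-1}U\right)$, since the left-hand factor $s_i^2\bI_n + s_i\widetilde{\bD}(c,g) + \bOmega^2$ is invertible at the interpolation points used to define $\bcV(c,g)$ (again by asymptotic stability). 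Once these three invertibility facts are in place the identity is exact and the two inclusions follow with no estimates whatsoever.
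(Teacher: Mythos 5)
Your proposal is correct and follows essentially the same route as the paper: split $\widetilde{\bD}(c,g)$ into internal and external parts, apply the Sherman--Morrison--Woodbury identity to each generator $(s_i^2\bI_n + s_i\widetilde{\bD}(c,g) + \bOmega^2)^{-1}\widetilde{\bB}b_i$, and read off the two inclusions from the resulting two-term expansion. Your explicit justification of the capacitance-matrix invertibility via the matrix determinant lemma is slightly more careful than the paper, which only remarks in passing that Sherman--Morrison--Woodbury can be used to show its existence.
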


\begin{proof}
We consider one element corresponding to the interpolation point $s_i$ and the tangential direction $b_i$ of the span in \eqref{eq:Vcg}.
One can apply Sherman-Morrison-Woodbury formula to show that $\left( \frac{1}{s_i}\bG(g)^{-1}+\widetilde{\bF}(c)^{\T}\bLambda(s_i)\widetilde{\bF}(c)\right)^{-1}$ exists.
Now, we apply the Sherman-Morrison-Woodbury formula and obtain
\begin{align*}
(s_i^2\bI_n + s_i\widetilde{\bD}(c, g) + &\bOmega^2)^{-1}\widetilde{\bB}b_i\\
&= \left(s_i^2\bI_n + 2s_i \alpha\bOmega + \bOmega^2 + s_i \widetilde{\bF}(c)\bG(g)\widetilde{\bF}(c)^{\T}\right)^{-1}\widetilde{\bB}b_i\\
&= \left(\bLambda^{-1}(s_i) + s_i \widetilde{\bF}(c)\bG(g)\widetilde{\bF}(c)^{\T}\right)^{-1}\widetilde{\bB}b_i\\
&= \bLambda(s_i)\widetilde{\bB}b_i - \bLambda(s_i)\widetilde{\bF}(c)\left( \frac{1}{s_i}\bG(g)^{-1}+\widetilde{\bF}(c)^{\T}\bLambda(s_i)\widetilde{\bF}(c)\right)^{-1}\widetilde{\bF}(c)^{\T}\bLambda(s_i)\widetilde{\bB}b_i\\
&= \bLambda(s_i)\widetilde{\bB}b_i - \bLambda(s_i)\widetilde{\bF}(c)\bH(s_i, b_i; c,g).
\end{align*}
We observe that
\begin{multline*}
(s_i^2\bI_n + s_i\widetilde{\bD}(c,~g) + \bOmega^2)^{-1}\widetilde{\bB}b_i \\\in \myspan{\bLambda(s_i)\widetilde{\bB}b_i,\; \bLambda(s_i)\widetilde{\bF}(c)\bH(s_i,b_i; c,g)}
\subseteq\myspan{\bLambda(s_i)\widetilde{\bB}b_i,\; \bLambda(s_i)\widetilde{\bF}(c)}.
\end{multline*}
Applying the same argument to all elements corresponding to the interpolation points $s_i$ and the tangential directions $b_i$, $i=1,\dots,N$, leads to the relation in \eqref{eq:DecContrSpace} with spaces defined as in  \eqref{eq:Vspace_def}.
\end{proof}
A direct consequence from that theorem is the following corollary which provides certain spaces that contain the controllability space for all admissible parameters.
\begin{corollary}\label{col:V}
Consider the asymptotically stable second-order system \eqref{eq:SOsysPhi}, the corresponding controllability spaces $\bcV(c,g)$, and the spaces $\bcV_0$, $\bcV_{\bH}(c,g)$, $\bcV_{\bF}(c)$ as defined in \eqref{eq:Vspace_def} for the parameter pairs $(c,g)\in\bcD$.
The spaces $\bcV$, $\bcV_{\bF}$, and $\bcV_{\bH}$ defined as
\begin{align}\label{eq:decV}
\bcV:=\bigcup_{(c,g)\in\bcD} \bcV(c,g),\qquad \bcV_{\bH}:= \bcV_0 \cup \bigcup_{(c,g)\in\bcD} \bcV_{\bH}(c,g),\qquad \bcV_{\bF} := \bcV_0 \cup \bigcup_{c\in\bcD_c}\bcV_{\bF}(c)
\end{align}
contain the corresponding controllability space for all parameters $(c,g)\in\bcD$ and they satisfy
\[
\bcV \subseteq \bcV_{\bH} \subseteq \bcV_{\bF}.
\]
\end{corollary}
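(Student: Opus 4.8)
The plan is to derive both inclusions directly from Theorem \ref{theo:decV} together with the definitions in \eqref{eq:decV}; no new analytic content is required, so the argument is essentially bookkeeping over the parameter set $\bcD$. Throughout, I read the symbol $\cup$ exactly as in the theorem, namely as the sum (the span of the union) of the listed subspaces, so that each $\bcV_0 \cup \bcV_{\bH}(c,g)$ is itself a subspace.

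For the first inclusion $\bcV \subseteq \bcV_{\bH}$, I would fix an arbitrary parameter pair $(c,g)\in\bcD$ and apply the left relation of \eqref{eq:DecContrSpace}, which gives $\bcV(c,g)\subseteq \bcV_0 \cup \bcV_{\bH}(c,g)$. By the definition $\bcV_{\bH} = \bcV_0 \cup \bigcup_{(c,g)\in\bcD}\bcV_{\bH}(c,g)$, both summands $\bcV_0$ and $\bcV_{\bH}(c,g)$ are contained in $\bcV_{\bH}$, hence so is their sum, and therefore $\bcV(c,g)\subseteq \bcV_{\bH}$. Since this holds for every $(c,g)$, taking the union over all parameters yields $\bcV = \bigcup_{(c,g)\in\bcD}\bcV(c,g) \subseteq \bcV_{\bH}$.

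For the second inclusion $\bcV_{\bH}\subseteq \bcV_{\bF}$, the key observation is that each generator of $\bcV_{\bH}(c,g)$ already lies in $\bcV_{\bF}(c)$: the vector $\bLambda(s_i)\widetilde{\bF}(c)\bH(s_i,b_i;c,g)$ is a linear combination of the columns of $\bLambda(s_i)\widetilde{\bF}(c)$, which are precisely the generators of $\bcV_{\bF}(c)$. Hence $\bcV_{\bH}(c,g)\subseteq \bcV_{\bF}(c)\subseteq \bcV_{\bF}$ for every $(c,g)$, while $\bcV_0 \subseteq \bcV_{\bF}$ by definition. Summing over the parameter set then gives $\bcV_{\bH} = \bcV_0 \cup \bigcup_{(c,g)\in\bcD}\bcV_{\bH}(c,g)\subseteq \bcV_{\bF}$. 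Alternatively, one can read off this inclusion directly from the right relation in \eqref{eq:DecContrSpace} combined with $\bcV_0 \cup \bcV_{\bF}(c)\subseteq \bcV_{\bF}$.

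Finally, the claim that $\bcV$, $\bcV_{\bH}$, and $\bcV_{\bF}$ each contain every controllability space $\bcV(c,g)$ is immediate: $\bcV(c,g)\subseteq \bcV$ holds by definition of the union, and the chain $\bcV\subseteq \bcV_{\bH}\subseteq \bcV_{\bF}$ propagates this containment to the two larger spaces. I do not anticipate any genuine obstacle; the only point requiring a moment's care is the notational convention that $\cup$ denotes the subspace sum rather than a set-theoretic union, which must be kept consistent with the statement of Theorem \ref{theo:decV}.
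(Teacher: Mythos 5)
Your proposal is correct and matches the paper's intent exactly: the paper states this corollary without proof as ``a direct consequence'' of Theorem~\ref{theo:decV}, and your bookkeeping argument (applying both inclusions of \eqref{eq:DecContrSpace} parameterwise and noting that each generator $\bLambda(s_i)\widetilde{\bF}(c)\bH(s_i,b_i;c,g)$ is a linear combination of the columns of $\bLambda(s_i)\widetilde{\bF}(c)$) is precisely the omitted reasoning. Your remark that $\cup$ must be read as the subspace sum rather than a set-theoretic union is a sensible clarification of the paper's informal notation.
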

Note that the space $\bcV_{\bF}(c)$ is even independent of the damping gains and depends only on the positions $c\in\bcD_{c}$ of the external dampers.
Therefore, to determine $\bcV_{\bF}$, only the spaces $\bcV_{\bF}(c)$ need to be calculated for a finite number of parameters, while for $\bcV$ and $\bcV_{\bH}$ an infinite number of parameters from $\bcD$ need to be considered to form the corresponding spaces.
However, forming the controllability spaces for all parameters in $\bcD_{c}$ is not practical, so that in the following we use the reduced basis method to approximate the different spaces from \eqref{eq:decV}.
In the following, the decomposition of the controllability spaces into a parameter-independent and into parameter-dependent parts are used to derive a reduced basis in order to reduce the system as described in \eqref{eq:SOsys_red} and to derive error indicators that quantify the quality of the reduced system.

\section{Reduction}\label{sec:Redu}
In this section, we generate a basis $\bV_{\rr}$ in an iterative scheme that is used to define a reduced optimization problem.
For the reduced second-order matrices as defined in \eqref{eq:redMatWT} with $\bW=\bV_{\rr}$ and $\bT=\bV_{\rr}$, the reduced first-order system matrices are
\[
\bcA_{\rr}(c, g):= \begin{bmatrix}
0 & \bI_r \\ -\bK_{\rr} &-\bD_{\rr}(c, g)
\end{bmatrix},\qquad \bcB_{\rr}:= \begin{bmatrix}
0\\ \bB_{\rr}
\end{bmatrix},\qquad \bcC_{\rr}:= \begin{bmatrix}
\bC_{\rr} & 0
\end{bmatrix}.
\]
Using these reduced first-order matrices, we define a reduced system response that we aim to minimize
\begin{align}\label{eq:redJ}
\bJ_{\rr}(c, g) := \sqrt{\trace{\bcC_{\rr}\bP_{\rr}(c,g)\bcC_{\rr}^{\T}}},
\end{align}
where $\bP_{\rr}(c,g)$ is the reduced controllability Gramian that solves the reduced Lyapunov equation
\begin{align}\label{eq:redJL}
\bcA_{\rr}(c, g)\bP_{\rr}(c,g)+\bP_{\rr}(c,g)\bcA_{\rr}(c, g)^{\T} = -\bcB_{\rr}\bcB_{\rr}^{\T}.
\end{align}

We aim to construct a space $\bcV$, $\bcV_{\bH}$, or $\bcV_{\bF}$ which spans the controllability space of the system in \eqref{eq:SOsysPhi} for all admissible dampers.
For that we utilize Corollary \ref{col:V}.
However, it is numerically unfeasible to compute the spaces $\bcV(c,g)$, $\bcV_{\bH}(c,g)$, or $\bcV_{\bF}(c)$ from \eqref{eq:Vspace_def} for all $(c, g)\in\bcD$.
Hence, we aim to apply an adaptive reduced basis method scheme based on the methods presented in \cite{morSonS17,morPrzPB23} to find a basis $\bV_{\rr}$ (and the corresponding space $\bcV_{\rr}=\myspan{\bV_{\rr}}$) that is parameter-independent and that spans an approximation of the controllability space $\bcV_{\rr}\approx \bcV$.
The idea of our method is to successively add basis vectors to the basis $\bV_{\rr}$ until the approximation of the controllability space is sufficiently good for all parameters $(c, g)\in\bcD$.

First, we define the basis $\bV_0$ which spans the controllability space of the externally undamped system, i.e. $\myspan{\bV_0} =\bcV_0$ and initialize $\bV_{\rr}$ by setting $\bV_{\rr}=\bV_0$.
As we observe in \eqref{eq:DecContrSpace}, the space $\bcV_{\rr} = \bcV_0$ is included in the controllability space for all different damper's positions and, hence, is a reasonable first basis.
Then we check, whether $\bV_{\rr}$ spans a space $\bcV_{\rr}$ that is already a good approximation of the controllability space $\bcV(c,g)$ for all $(c, g)\in\bcD$, and hence, of $\bcV$.
If this is not the case, we select parameters $(c, g)\in\bcD$ for which the controllability space is not well-approximated by the current $\bcV_{\rr}$, and add the basis vectors corresponding to the space $\bcV_{\bH}(c,g)$ or $\bcV_{\bF}(c)$ into the basis $\bV_{\rr}$.
We continue with this process until the controllability spaces for all $(c, g)\in\bcD$ are approximated sufficiently good by the basis matrix $\bV_{\rr}$.

We embed this basis building into the optimization process.
For that, we define an initial reduced optimization problem \eqref{eq:redJ} with the first basis $\bV_{\rr} = \bV_0$ and run an optimization method with initial values $(c_0, g_0)$ to obtain a minimizer $(c^*, g^*)$.
Corresponding to the variables $(c^*, g^*)$ we determine a basis $\bV_{\bH}(c^*,g^*)$ or $\bV_{\bF}(c^*)$ that spans either $\bcV_{\bH}(c^*,g^*)$ or $\bcV_{\bF}(c^*)$, respectively, and that is used to enrich the basis $\bV_{\rr}$ as
\begin{align}\label{eq:Vr_enrich}
\bV_{\rr}=\mathrm{orth}([\bV_{\rr},~\bV_{\bH}(c^*, g^*)])\qquad\text{ or }\qquad\bV_{\rr}=\mathrm{orth}([\bV_{\rr},~\bV_{\bF}(c^*)]).
\end{align}
We set the new initial values $(c_0, g_0) = (c^*, g^*)$ and run a new optimization method.
We continue with this procedure until two consecutive minimizers are equal or the relative norm of their difference is smaller than a tolerance $\tol_{\mathrm{err}_1}$, which leads to Algorithm \ref{algo:RBM_Opti}.
\begin{algorithm}[tb]
	\caption{Optimization using the reduced basis method}
	\label{algo:RBM_Opti}
	\begin{algorithmic}[1]
		\Require{The original system \eqref{eq:SOsysPhi}, parameter set $\bcD$, a tolerance $\tol_{\mathrm{err}_1}$.}
		\Ensure{An suboptimal damping $(c^*,g^*)$.}
		\State Set initial values $(c_0,g_0)\in\bcD$.
		\State Determine $\bV_0$.\label{step:V1}
		\State Set $\bV_{\rr} = \mathrm{orth}(\bV_0)$.
		\State Define the by $\bV_{\rr}$ reduced optimization problem $\bJ_{\rr}(c, g)$.
		\State Optimize $\bJ_{\rr}(c, g)$ as defined in \eqref{eq:redJ} to obtain the suboptimal $(c^*,g^*)$.
		\While{$\frac{\|c^*-c_0\|}{\|c^*\|}+\frac{\|g^*-g_0\|}{\|g^*\|}>\tol_{\mathrm{err}_1}$}
		\State Set $c_0=c^*$, $g_0=g^*$.
		\State Determine $\bV_{\bH}(c^*, g^*)$ or $\bV_{\bF}(c^*)$.\label{step:V2}
		\State Set $\bV_{\rr}=\mathrm{orth}([\bV_{\rr},~\bV_{\bH}(c^*, g^*)])$ or $\bV_{\rr}=\mathrm{orth}([\bV_{\rr},~\bV_{\bF}(c^*)])$.
		\State Define the by $\bV_{\rr}$ reduced optimization problem $\bJ_{\rr}(c^*, g^*)$.
		\State Optimize $\bJ_{\rr}(c, g)$ as defined in \eqref{eq:redJ} to obtain the suboptimal $(c^*,g^*)$.
		\EndWhile
	\end{algorithmic}
\end{algorithm}
Later in this work, we will introduce an error indicator that provides a criterion to stop the optimization before an optimum has been determined if the basis is not approximating the controllability space good enough for the parameters evaluated in the optimization process.
Since we approximate the solution and lack information on existence of a global minimum, we can only assert that hte obtained solution is suboptimal.

It remains to find methods that determine the bases $\bV_0$ and $\bV_{\bH}(c,g)$ or $\bV_{\bF}(c)$.
For that, we consider two different techniques.
The first one is introduced in Subsection \ref{ssec:BT_R} and uses second-order Gramians to determine bases of $\bV_{\bF}(c)$.
In the second one, described in Subsection \ref{ssec:IRKA_R}, we apply the IRKA method for second-order systems in order to approximate the bases $\bV_{\bH}(c,g)$.

\subsection{Gramian based approach}\label{ssec:BT_R}
As described in \eqref{eq:decV}, the space $\bcV_{\bF}\supseteq\bcV$ can be decomposed into a parameter-independent space $\bcV_0$ and into the parameter-dependent spaces $\bcV_{\bF}(c)$ as
\begin{align}\label{eq:spaceVi}
\bcV \subseteq \bcV_{\bF} =  \bcV_0 \cup \bigcup_{c\in\bcD_c} \bcV_{\bF}(c)
\qquad\text{
where
}\qquad
\bcV_{\bF}(c)=\myspan{\bLambda(s_1)\widetilde{\bF}(c), \dots ,\bLambda(s_N)\widetilde{\bF}(c)
}.
\end{align}
We emphasize that $\bcV_{\bF}(c)$ does not depend on the viscosities $g\in\bcD_g$.
As described in \eqref{eq:Vr_enrich}, we build an approximating controllability space $\bcV_{\rr}=\myspan{\bV_{\rr}}$ by adding the columns of the bases $\bV_{0}$ and $\bV_{\bF}(c)$ with $\bcV_{0}=\myspan{\bV_{0}}$ and  $\bcV_{\bF}(c)=\myspan{\bV_{\bF}(c)}$ into the basis $\bV_{\rr}$.

In this section, we use second-order controllability Gramians to build bases $\bV_0$ and $\bV_{\bF}(c)$ for the different parameters $c\in\bcD_{c}$.
The spaces $\bcV_0$ and $\bcV_{\bF}(c)$ span the controllability spaces of the second-order systems
\begin{align}\label{eq:homoSOsystems}
\ddot{\bx}_0(t) + 2\alpha\bOmega \dot{\bx}_0(t) + \bOmega^2\bx_0(t) = \widetilde{\bB}\bu(t),\qquad
\ddot{\bx}_c(t) + 2\alpha\bOmega \dot{\bx}_c(t) + \bOmega^2\bx_c(t) = \widetilde{\bF}(c)\bu(t).
\end{align}
According to Definition \ref{def:SO_mappingsGramians}, the controllability spaces of $\bx_0(t)$ and $\bx_c(t)$ are also described by the second-order controllability Gramians
\begin{align*}
\bP_0 := \int_{-\infty}^{\infty} \bLambda(i\omega)\widetilde{\bB}\widetilde{\bB}^{\T}\bLambda(i\omega)^{\rmH}\dd \omega, \qquad
\bP_{\bF}(c) := \int_{-\infty}^{\infty} \bLambda(i\omega)\widetilde{\bF}(c)\widetilde{\bF}(c)^{\T}\bLambda(i\omega)^{\rmH}\dd \omega
\end{align*}
for $\bLambda(s):=(s^2\bI_n + s 2\alpha\bOmega + \bOmega^2)^{-1}$.
We define the factorizations $\bP_0 = \bR_0\bR_0^{\T}$ and $\bP_{\bF}(c) = \bR_{\bF}(c)\bR_{\bF}(c)^{\T}$, that exist since the Gramians are symmetric and positive semi-definite, and set the bases to be
\begin{align}\label{eq:basisV0Vc}
\bV_0 = \mathrm{orth}(\bR_0), \qquad \bV_{\bF}(c) = \mathrm{orth}(\bR_{\bF}(c)).
\end{align}
According to Proposition \ref{prop:SO_Gram}, the second-order Gramians $\bP_0$ and $\bP_{\bF}(c)$ are the upper-left blocks of the first-order Gramians $\bcX_0$ and $\bcX_{\bF}(c)$ that are computed by solving the first-order Lyapunov equations
\begin{align}\label{eq:homoFOLE}
\bcA_0 \bcX_0 + \bcX_0\bcA_0^{\T} = -\bcB\bcB^{\T}, \qquad
\bcA_0 \bcX_{\bF}(c) + \bcX_{\bF}(c)\bcA_0^{\T} = -\bcF(c)\bcF(c)^{\T}
\end{align}
with
\begin{align}\label{eq:A0_B0}
\bcA_0:=\begin{bmatrix}
0 & \bI\\ -\bOmega^2 & -2\alpha\bOmega
\end{bmatrix},\qquad  \bcF(c):=\begin{bmatrix}
0\\ \widetilde{\bF}(c)
\end{bmatrix}, \qquad\text{ and }\qquad \bcB=\begin{bmatrix}
0\\ \widetilde{\bB}
\end{bmatrix}.
\end{align}
We observe that the four blocks within the matrix $\bcA_0$ are diagonal matrices which makes the Lyapunov solvings numerically more feasible than solving the Lyapunov equations in \eqref{eq:Lyap}.
Applying some iterative method to compute $\bcX_0$ and $\bcX_{\bF}(c)$ and their factorizations provides for low-rank factors $\bR_0$ and $\bR_{\bF}(c)$ that span approximately the same spaces as the desired $\bV_0$ and $\bV_{\bF}(c)$.
Finally, we add the computation of the bases $\bV_{0}$ and $\bV_{\bF}(c)$ into Step \ref{step:V1} and \ref{step:V2} of the Algorithm \ref{algo:RBM_Opti}.

\subsection{IRKA based approach}\label{ssec:IRKA_R}
According to \eqref{eq:decV}, we decompose the controllability space $\bcV_{\bH}\supseteq\bcV$ into a parameter-independent space and several parameter-dependent ones as
\begin{multline*}
\bcV \subseteq \bcV_{\bH} = \bcV_0 \cup \bigcup_{c\in\bcD_c\atop g\in\bcD_g} \bcV_{\bH}(c,g),\\
\bcV_{\bH}(c, g)=\myspan{
\bLambda(s_1)\widetilde{\bF}(c)\bH(s_1, b_1; c,g) , \dots ,\bLambda(s_N)\widetilde{\bF}(c)\bH(s_N, b_N; c,g)
}.
\end{multline*}
The space $\bcV_{\bH}(c, g)$ depends on the damping positions $c\in\bcD_c$ and on the viscosities $g\in\bcD_g$.
Again, we define an approximating space $\bcV_{\rr}=\myspan{\bV_{\rr}}$ using the parameter-independent basis $\bcV_{\rr}=\bV_0$ and by adding iteratively the bases $\bV_{\bH}(c,g)$ with $\bcV_{\bH}(c,g) = \myspan{\bV_{\bH}(c,g)}$ until the basis $\bV_{\rr}$ is sufficiently good.
The bases $\bV_0$ and $\bV_{\bH}(c,g)$ are generated by applying the IRKA method for second-order systems that was presented in Algorithm \ref{algo:IRKA_SO}.
Consequently, in Steps \ref{step:V1} and \ref{step:V2} of the Algorithm \ref{algo:RBM_Opti} we build $\bV_{\bH}(c,g)$ by setting
\[
\bV_{\bH}(c,g):=\orth\begin{bmatrix}
\bLambda(s_1)\widetilde{\bF}(c)\bH(s_1, b_1; c,g) & \dots & \bLambda(s_r)\widetilde{\bF}(c)\bH(s_r, b_r; c,g)
\end{bmatrix}
\]
with $\bH(s_i, b_i; c,g) := \left( \frac{1}{s_i}\bG(g)^{-1}+\widetilde{\bF}(c)^{\T}\bLambda(s_i)\widetilde{\bF}(c)\right)^{-1}\widetilde{\bF}(c)^{\T}\bLambda(s_i)\widetilde{\bB}b_i$ for interpolation points $s_i$ and tangential directions $b_i$, $i=1,\dots, r$, that are derived using the IRKA method presented in Algorithm \ref{algo:IRKA_SO}.
Note, that we only consider $r$ interpolation points and directions, since we aim to find a reduced approximation of the corresponding controllability space of dimension $r$.

\section{Error indicator}\label{sec:ErrorDet}
During the optimization and basis building process presented in Algorithm \ref{algo:RBM_Opti}, two consecutive optima $(c^*, g^*)$ generated within this algorithm can have a distance smaller than a tolerance, even if the real optimum is not reached yet.
This situation might appear, if two consecutive bases $\bV_{\rr}$ span spaces that are close to each other but do not approximate the controllability space for the used parameters well.
Also, the optimization process is executed until an optimum is reached, even if the basis leads to a bad approximation of the controllability space, and hence, the corresponding optimization is meaningless.
A solution to both the problems is provided by an error indicator that evaluates the quality of the controllability space approximation by $\bV_{\rr}$.
Hence, in this section, we aim to derive an error expression that indicates the quality of the basis $\bV_{\rr}$ as an approximation of the controllability space $\bcV(c,g)$ as defined in \eqref{eq:Vcg} for parameters $(c,g)\in\bcD$.

For that, we consider the space $\bcV_{\bF}(c)$ which corresponds to the second-order system \eqref{eq:homoSOsystems} and the corresponding position controllability Gramian $\bP_{\bF}(c)$ that is the upper left block $\bX_{11}(c)$ of the first-order Gramian
\begin{align}\label{eq:X_dec}
\bcX_{\bF}(c)=\begin{bmatrix}
\bX_{11}(c) & \bX_{12}(c) \\ \bX_{12}(c)^{\T} & \bX_{22}(c)
\end{bmatrix}
\end{align}
that solves the Lyapunov equation from \eqref{eq:homoFOLE} with the right-hand side $-\bcF(c)\bcF(c)^{\T}$.
Using the basis $\bV_{\rr}$, we determine the approximation of the  Gramian
\begin{align}\label{eq:Y_dec}
\bcY_{\bF}(c)=\begin{bmatrix}
\bY_{11}(c) & \bY_{12}(c) \\ \bY_{12}(c)^{\T} & \bY_{22}(c)
\end{bmatrix}=\begin{bmatrix}
\bV_{\rr} \bY_{11,V}(c)\bV_{\rr}^{\T} & \bV_{\rr} \bY_{12,V}(c)\bV_{\rr}^{\T} \\ \bV_{\rr} \bY_{12,V}(c)^{\T}\bV_{\rr}^{\T} & \bV_{\rr} \bY_{22,V}(c)\bV_{\rr}^{\T}
\end{bmatrix}
\end{align}
where $\bY_{11}=:\widetilde{\bP}_{\bF}(c)$ is an approximation of $\bP_{\bF}(c)$.
The reduced Gramian $\bcY_{\bV}(c)$ used in this approximation is computed by solving the reduced Lyapunov equation
\begin{align}\label{eq:Y_red}
\bcA_{0,\bV}\bcY_{\bV}(c)+\bcY_{\bV}(c)\bcA_{0,\bV}^{\T}=-\bcF_{\bV}(c)\bcF_{\bV}(c)^{\T}
\end{align}
with
\[
\bcA_{0,\bV}:= \begin{bmatrix}
0 & \bI \\ -\bV_{\rr}^{\T}\bOmega^2 \bV_{\rr} & -2\alpha \bV_{\rr}^{\T}\bOmega \bV_{\rr}
\end{bmatrix},\quad \bcF_{\bV}(c):=\begin{bmatrix}
0\\ \bV_{\rr}^{\T}\widetilde{\bF}(c)
\end{bmatrix},\quad \bcY_{\bV}(c)=\begin{bmatrix}
\bY_{11,\bV}(c) & \bY_{12,\bV}(c) \\ \bY_{12,\bV}(c)^{\T} & \bY_{22,\bV}(c)
\end{bmatrix}.
\]
Now, if $\bP_{\bF}(c)$ is well-approximated by $\widetilde{\bP}_{\bF}(c)=\bV_{\rr} \bY_{11,V}(c)\bV_{\rr}^{\T}$, then
\begin{align*}
\bcV(c,g)&\subseteq \bcV_0 \cup \bcV_{\bF}(c)
=\myspan{\bP_{\bB}}\cup \myspan{\bP_{\bF}(c)}\\
&\approx\myspan{\bP_{\bB}}\cup \myspan{\widetilde{\bP}_{\bF}(c)}
=\myspan{\bP_{\bB}}\cup \myspan{\bV_{\rr}}
=\myspan{\bV_{\rr}},
\end{align*}
and hence, the controllability space $\bcV(c,g)$ is well-approximated by the basis $\bV_{\rr}$.
Consequently, evaluating the error $\bP_{\bF}(c)-\widetilde{\bP}_{\bF}(c)$ provides a criterion that indicates the quality of the approximation of the controllability space $\bcV(c,g)$ by $\bcV_{\rr}=\mathrm{span}(\bV_{\rr})$.

Inserting the decomposition of $\bcX_{\bF}(c)$ from \eqref{eq:X_dec} and $\bcY_{\bF}(c)$ from \eqref{eq:Y_dec} into the Lyapunov equation in \eqref{eq:homoFOLE} with the right-hand side $-\bcF(c)\bcF(c)^{\T}$ leads to the sub-equations
\begin{subequations}\label{eq:I_to_III}
\begin{align}
&\bX_{12}(c)^{\T} + \bX_{12}(c) = 0, \label{eq:I}\\
&\bX_{22}(c) - \bX_{11}(c)\bOmega^2 -2\alpha \bX_{12}(c)\bOmega = 0, \label{eq:II}\\
&\bOmega^2\bX_{12}(c) + \bX_{12}(c)^{\T}\bOmega^2 + 2\alpha \bX_{22}(c)\bOmega + 2\alpha\bOmega \bX_{22}(c) +\bW(c) = 0,\label{eq:III}
\end{align}
\end{subequations}
where $\bW(c):=\bF(c)\bF(c)^{\T}$ and the corresponding errors
\begin{subequations}\label{eq:EI_to_EIII}
\begin{align}
\bE_1(c)&:=\bY_{12}(c)^{\T} + \bY_{12}(c), \label{eq:EI}\\
\bE_2(c)&:=\bY_{22}(c) - \bY_{11}(c)\bOmega^2 -2\alpha \bY_{12}(c)\bOmega,  \label{eq:EII}\\
\bE_3(c)&:=\bOmega^2\bY_{12}(c) + \bY_{12}(c)^{\T}\bOmega^2 + 2\alpha \bY_{22}(c)\bOmega + 2\alpha\bOmega \bY_{22}(c) +\bW(c)\label{eq:EIII}.
\end{align}
\end{subequations}

Using the errors $\bE_{1}(c)$, $\bE_{2}(c)$, and $\bE_{3}(c)$ defined in \eqref{eq:EI_to_EIII}, we derive a formula describing the error between $\bP_{\bF}(c)$ and $\widetilde{\bP}_{\bF}(c)$.
\begin{theorem}
Consider the asymptotically stable second-order system \eqref{eq:homoSOsystems} with an input matrix $\bF(c)\in\R^{n\times\ell}$, with the corresponding first-order Gramian $\bcX_{\bF}(c)$ as defined in \eqref{eq:X_dec}, and with a reduction basis $\bV_{\rr}\in\Rnr$ that determines the matrix $\bcY_{\bF}(c)$ as defined in \eqref{eq:Y_dec}.
Then it holds
\begin{align}\label{eq:Delta}
\bDelta(c):={}& \trace{\bX_{11}(c)-\bY_{11}(c)}\nonumber\\
={}&\trace{\bE_2(c)\bOmega^{-2}} +\frac{1}{4\alpha}\left( -\trace{\bOmega^{-3}\bE_3(c)}+\trace{\bOmega^{-1}\bE_1(c)}\right)+\alpha\trace{\bE_1(c)\bOmega^{-1}}
\end{align}
with $\bE_1(c)$, $\bE_2(c)$, and $\bE_3(c)$ as defined in \eqref{eq:EI_to_EIII}.
\end{theorem}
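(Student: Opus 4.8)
The plan is to compute $\trace{\bX_{11}(c)}$ and $\trace{\bY_{11}(c)}$ separately from the block equations \eqref{eq:I_to_III} and their error-perturbed counterparts \eqref{eq:EI_to_EIII}, and then subtract so that the undesirable data-dependent term cancels. Throughout I will use that $\bOmega$ is diagonal and invertible (the eigenfrequencies are nonzero), so that $\bOmega^{-1}$, $\bOmega^{-2}$, $\bOmega^{-3}$ are well-defined and symmetric, together with the cyclic invariance of the trace and the elementary fact that the trace of the product of a skew-symmetric matrix with a symmetric one vanishes.

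First I would treat the exact Gramian. Equation \eqref{eq:I} says that $\bX_{12}(c)$ is skew-symmetric, hence $\trace{\bX_{12}(c)\bOmega^{-1}} = 0$. Multiplying \eqref{eq:II} on the right by $\bOmega^{-2}$ and taking the trace then gives $\trace{\bX_{11}(c)} = \trace{\bX_{22}(c)\bOmega^{-2}}$, the off-diagonal term dropping out. Next I would multiply \eqref{eq:III} on the right by $\bOmega^{-3}$ and take the trace: the two $\bX_{12}(c)$-terms reduce, by cyclicity, to $\trace{\bX_{12}(c)\bOmega^{-1}}$ and $\trace{\bX_{12}(c)^{\T}\bOmega^{-1}}$, both zero by skew-symmetry, while the two $\bX_{22}(c)$-terms collapse to $4\alpha\,\trace{\bX_{22}(c)\bOmega^{-2}}$. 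This yields the clean identity $\trace{\bX_{11}(c)} = \trace{\bX_{22}(c)\bOmega^{-2}} = -\tfrac{1}{4\alpha}\trace{\bW(c)\bOmega^{-3}}$.

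The same manipulations applied to the perturbed relations \eqref{eq:EI}--\eqref{eq:EIII} produce the analogue for $\bY_{11}(c)$, except that each step now picks up an error contribution. The key bookkeeping is that \eqref{eq:EI} no longer forces $\bY_{12}(c)$ to be skew-symmetric; instead its symmetric part equals $\tfrac12\bE_1(c)$, so that $\trace{\bY_{12}(c)\bOmega^{-1}} = \tfrac12\trace{\bE_1(c)\bOmega^{-1}}$ and, by the same splitting, $\trace{\bY_{12}(c)^{\T}\bOmega^{-1}} = \tfrac12\trace{\bE_1(c)\bOmega^{-1}}$. Carrying these through, \eqref{eq:EII} gives $\trace{\bY_{11}(c)} = \trace{\bY_{22}(c)\bOmega^{-2}} - \alpha\,\trace{\bE_1(c)\bOmega^{-1}} - \trace{\bE_2(c)\bOmega^{-2}}$, while multiplying \eqref{eq:EIII} by $\bOmega^{-3}$ and tracing gives $4\alpha\,\trace{\bY_{22}(c)\bOmega^{-2}} = \trace{\bE_3(c)\bOmega^{-3}} - \trace{\bE_1(c)\bOmega^{-1}} - \trace{\bW(c)\bOmega^{-3}}$.

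Finally I would subtract. Forming $\bDelta(c) = \trace{\bX_{11}(c)} - \trace{\bY_{11}(c)}$ and inserting the two expressions, the data term $\trace{\bW(c)\bOmega^{-3}}$ cancels exactly, leaving precisely the three error traces of \eqref{eq:Delta} once cyclicity is used to rewrite $\trace{\bOmega^{-3}\bE_3(c)} = \trace{\bE_3(c)\bOmega^{-3}}$ and $\trace{\bOmega^{-1}\bE_1(c)} = \trace{\bE_1(c)\bOmega^{-1}}$. I do not expect any single computation to be hard; the only real obstacle is the disciplined use of cyclicity together with the symmetric/skew-symmetric splitting of the off-diagonal blocks. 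It is exactly the skew-symmetry of $\bX_{12}(c)$ --- and its controlled failure for $\bY_{12}(c)$, quantified by $\bE_1(c)$ --- that both makes the $\bOmega^{-3}$-weighted traces collapse and forces the $\bW(c)$-term to cancel in the difference.
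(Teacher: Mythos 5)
Your proof is correct and follows essentially the same route as the paper's: the same skew-symmetric/symmetric splitting of the off-diagonal blocks ($\bX_{12}(c)$ skew-symmetric, $\bY_{12}(c)=\bS_Y(c)+\frac{1}{2}\bE_1(c)$), the same $\bOmega^{-2}$- and $\bOmega^{-3}$-weighted trace identities extracted from the second and third block equations, and the same vanishing of skew-symmetric parts under the trace. The only difference is organizational --- you evaluate $\trace{\bX_{11}(c)}$ and $\trace{\bY_{11}(c)}$ separately and subtract at the end (so the $\bW(c)$-term cancels only in the final difference, yielding the closed-form $\trace{\bX_{11}(c)}=-\frac{1}{4\alpha}\trace{\bW(c)\bOmega^{-3}}$ as a byproduct), whereas the paper first subtracts the paired block equations \eqref{eq:I_to_III} and \eqref{eq:EI_to_EIII} and then manipulates the resulting difference equations.
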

\begin{proof}
The equations in \eqref{eq:I} and \eqref{eq:EI} yield
\begin{align}\label{eq:Sxy}
\bX_{12}(c) = \bS_X(c), \quad \bY_{12}(c) = \bS_Y(c) + \frac{1}{2}\bE_{1}(c),\quad\text{and}\quad \bX_{12}(c)-\bY_{12}(c) = \bS_X(c) - \bS_Y(c) - \frac{1}{2}\bE_{1}(c),
\end{align}
where $\bS_X(c) = -\bS_X(c)^{\T}$ and $\bS_Y(c) = -\bS_Y(c)^{\T}$ are skew-symmetric matrices.
Now, we subtract the equation in \eqref{eq:EII} from the one in \eqref{eq:II} and multiply from the right by $\bOmega^{-2}$ to obtain
\begin{align}\label{eq:IIminEII}
(\bX_{22}(c)-\bY_{22}(c))\bOmega^{-2} - (\bX_{11}(c)-\bY_{11}(c)) -2\alpha (\bX_{12}(c)-\bY_{12}(c))\bOmega^{-1} = -\bE_2(c)\bOmega^{-2}.
\end{align}
We aim to compute the error between $\bX_{11}(c)$ and $\bY_{11}(c)$.
For that, according to the equation in \eqref{eq:IIminEII}, we need to describe $(\bX_{22}(c)-\bY_{22}(c))\bOmega^{-2}$ in more detail. We subtract the equation in \eqref{eq:EIII} from the one in \eqref{eq:III} and multiply from the left by $\bOmega^{-3}$ to obtain
\begin{multline*}
\bOmega^{-1}(\bX_{12}(c)-\bY_{12}(c))+\bOmega^{-3}(\bX_{12}(c)-\bY_{12}(c))^{\T}\bOmega^2\\
+2\alpha\bOmega^{-2}(\bX_{22}(c)-\bY_{22}(c))+2\alpha\bOmega^{-3}(\bX_{22}(c)-\bY_{22}(c))\bOmega
= -\bOmega^{-3}\bE_3(c).
\end{multline*}
Utilizing the equation from \eqref{eq:Sxy} and the trace properties yields
\begin{align}\label{eq:IIIminEIII}
2&\trace{\bOmega^{-2}(\bX_{22}(c)-\bY_{22}(c))}\nonumber\\
&=\frac{1}{2\alpha}\left( -\trace{\bOmega^{-3}\bE_3(c)}-\trace{\bOmega^{-1}(\bX_{12}(c)-\bY_{12}(c))}-\trace{\bOmega^{-3}(\bX_{12}(c)-\bY_{12}(c))^{\T}\bOmega^2\right)}\nonumber\\
&=\frac{1}{2\alpha}\left( -\trace{\bOmega^{-3}\bE_3(c)}+\trace{\bOmega^{-1}\bE_1(c)}-2\trace{\bOmega^{-1}(\bS_X(c)-\bS_Y(c))}\right)\nonumber\\
&=\frac{1}{2\alpha}\left( -\trace{\bOmega^{-3}\bE_3(c)}+\trace{\bOmega^{-1}\bE_1(c)}\right)
\end{align}
since $\trace{\bOmega^{-1}(\bS_X(c)-\bS_Y(c))}=0$ because of the skew-symmetry of $\bS_X(c)-\bS_Y(c)$.
Now, we apply the trace operator to the equation in \eqref{eq:IIminEII} and insert the equation from \eqref{eq:IIIminEIII} to obtain
\begin{align*}
&\trace{\bX_{11}(c)-\bY_{11}(c)} \\
&\qquad\qquad\qquad= \trace{\bE_2(c)\bOmega^{-2}} +\trace{(\bX_{22}(c)-\bY_{22}(c))\bOmega^{-2}}-2\alpha\trace{ (\bX_{12}(c)-\bY_{12}(c))\bOmega^{-1}}\\
&\qquad\qquad\qquad= \trace{\bE_2(c)\bOmega^{-2}} +\frac{1}{4\alpha}\left( -\trace{\bOmega^{-3}\bE_3(c)}+\trace{\bOmega^{-1}\bE_1(c)}\right)+\alpha\trace{ (\bE_1(c))\bOmega^{-1}}.
\end{align*}
\end{proof}
With this theorem, we have an explicit formula for the error in our second-order controllability Gramian, and therefore an indicator of the quality of the approximation of the controllability space for a parameter $(c,g)\in\bcD$ by the basis $\bV_{\rr}$.
This indicator $\bDelta(c)$ is used to improve the two different versions of Algorithm \ref{algo:RBM_Opti} for the two basis enrichment methods presented in the Subsections \ref{ssec:BT_R} and \ref{ssec:IRKA_R}.

Assume that we have a given basis $\bV_{\rr}$ generated with either of the two methods presented in Subsections \ref{ssec:BT_R} and \ref{ssec:IRKA_R}.
We run the optimization process and check in every step of the current optimization process whether the controllability space corresponding to the current position $c\in\bcD_c$ is well-approximated by the basis $\bV_{\rr}$.
If this is the case, i.e. if $\bDelta(c)$ is smaller than a given tolerance, we continue the optimization process.
Otherwise, we stop the optimization process, since the optimization results might be meaningless, and enrich the basis $\bV_{\rr}$ by the basis $\bV_{\bF}(c)$ or $\bV_{\bH}(c,g)$ which are either generated using the Gramian-based method presented in Subsection \ref{ssec:BT_R} or an IRKA method from Subsection \ref{ssec:IRKA_R}.
This process is described by Algorithms \ref{algo:RBM_Delta} and \ref{algo:stop}.
In Algorithm \ref{algo:stop}, we define the function that is optimized where the error $\bDelta(c)$ is computed in every function evaluation.
If the error $\bDelta(c)$ in the current parameter $c$ is larger than a certain tolerance, the method returns the boolean variable $\mathtt{conv} = 0$ that indicates that the current basis $\bV_{\rr}$ is not sufficiently good.
Algorithm \ref{algo:RBM_Delta} describes the overall optimization and basis enrichment method, where we run a reduced optimization process in Steps \ref{S3} and \ref{S6} until we either found an optimum $(c^*,g^*)\in\bcD$ or the basis $\bV_{\rr}$ is not sufficiently good, i.e. $\mathtt{conv} = 0$.
If the basis is not sufficiently good, we enrich the basis in Step \ref{step5} by adding either  $\bV_{\bF}(c^*)$ or $\bV_{\bH}(c^*, g^*)$ into the basis and proceed with this method until an optimum is found.
\begin{algorithm}[tb]
\caption{Optimization using $\bDelta(c)$}
\label{algo:RBM_Delta}
\begin{algorithmic}[1]
\Require{The original system \eqref{eq:SOsysPhi}, parameter set $\bcD$.}
\Ensure{An suboptimal damping $(c^*,g^*)\in\bcD$.}
\State Set initial values $(c_0,g_0)\in\bcD$.
\State Generate a first basis $\bV_{\rr} = \orth(\bV_0)$.
\State Optimize function $\mathtt{fun_{stop}}$ presented in Algorithm \ref{algo:stop} with an initial value $(c_0,g_0)$ to obtain $(c^*,g^*)$ and $\mathtt{conv}$.\label{S3}
\While {$\mathtt{conv} = 0$}
\State Enrich basis as $\bV_{\rr} = \orth{[\bV_{\rr},\, \bV_{\bF}(c^*)]}$ or $\bV_{\rr} = \orth{[\bV_{\rr},\, \bV_{\bH}(c^*, g^*)]}$ using either $\bV_{\bF}(c^*)$ generated as in \eqref{eq:basisV0Vc} or $\bV_{\bF}(c^*, g^*)$ determined by applying the IRKA method as in Algorithm \ref{algo:IRKA_SO}.\label{step5}
\State Optimize function $\mathtt{fun_{stop}}$ presented in Algorithm \ref{algo:stop} with initial value $(c^*,g^*)$ to obtain new values $(c^*,g^*)$ and $\mathtt{conv}$.\label{S6}
\EndWhile
\end{algorithmic}
\end{algorithm}

\begin{algorithm}[tb]
\caption{Optimizing function $\mathtt{fun_{stop}}$}
\label{algo:stop}
\begin{algorithmic}[1]
\Require{The original system \eqref{eq:SOsysPhi}, parameters $c\in\bcD_c$, $g\in\bcD_g$, tolerance $\mathrm{tol}_{\mathrm{err}_2}$, basis $\bV_{\rr}$.}
\Ensure{function value $\bJ_{\rr}(c,g)$, boolean variable $\mathtt{conv}$.}
\State Compute error $\bDelta(c)$ as defined in \eqref{eq:Delta}.
\If {$\bDelta(c) < \mathrm{tol}_{\mathrm{err}_2}$}
\State Define reduced matrices
\begin{align*}
\bD_{\rr}(c, g):= \bV_{\rr}^{\T}\widetilde{\bD}(c, g)\bV_{\rr},\quad \bK_{\rr}:= \bV_{\rr}^{\T}\bOmega^2\bV_{\rr},\quad \bB_{\rr}:= \bV_{\rr}^{\T}\widetilde{\bB},\quad \bC_{\rr}:= \widetilde{\bC}\bV_{\rr}.
\end{align*}
\State Compute reduces system response $\bJ_{\rr}$ as in \eqref{eq:redJL}.
\State Set $\mathtt{conv} = 1$.
\Else
\State Set $\mathtt{conv} = 0$.
\State Stop optimization process.
\EndIf
\end{algorithmic}
\end{algorithm}

\section{Computational details}\label{sec:CompDet}
In this section, we describe some computational details. In Subsection \ref{ssec:Diag}, we introduce a diagonalization of the matrices in the Lyapunov equations in \eqref{eq:homoFOLE}, which accelerates the solving process.
In addition, Subsection \ref{ssec:contOpt} describes the objective function formulation for the optimization process, especially for the discrete position values.

\subsection{Diagonalization of the Lyapunov equations}\label{ssec:Diag}
We consider the Lyapunov equations from \eqref{eq:homoFOLE}, where the matrix $\bcA_0$ consists of four blocks of diagonal matrices.
We can easily diagonalize such a matrix $\bcA_0$ to make the solving process of the Lyapunov equations even more efficient.
We consider the first equation in \eqref{eq:homoFOLE} with the right-hand side $-\bcB\bcB^{\T}$ while the second one is treated equally.
\begin{lemma}
Consider the Lyapunov equation from \eqref{eq:homoFOLE} with the right-hand side $\bcB\bcB^{\T}$, where $\bcA_0$ is defined as in \eqref{eq:A0_B0}.
Then there exist a nonsingular matrix $\bT$ such that $\bcA_{\rmD}=\bT\bcA_0\bT^{-1}$ is a diagonal matrix and $\bcX_{\rmD}=\bT^{-1}\bcX_0\bT$, where
\begin{align}\label{eq:lyap_diag}
\bcA_{\rmD} \bcX_{\rmD} +  \bcX_{\rmD} \bcA_{\rmD}^{\T} = -\bcB_{\rmD} \bcB_{\rmD}^{\T}
\end{align}
for $\bB_{\rmD}=\bT^{-1}\bB$.
\end{lemma}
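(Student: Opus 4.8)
The plan is to exploit that every $n\times n$ block of $\bcA_0$ in \eqref{eq:A0_B0} is diagonal, so that the $2n$ state components decouple in pairs. Concretely, I would introduce the perfect-shuffle permutation $\Pi$ that reorders the coordinates from $(\widetilde{\bx},\dot{\widetilde{\bx}})$ to the component-wise pairs $(\widetilde{x}_i,\dot{\widetilde{x}}_i)_{i=1}^{n}$; then
\begin{align*}
\Pi\,\bcA_0\,\Pi^{\T} = \mathrm{blockdiag}(\mathbf{A}_1,\dots,\mathbf{A}_n), \qquad \mathbf{A}_i := \begin{bmatrix} 0 & 1 \\ -\omega_i^2 & -2\alpha\omega_i \end{bmatrix}.
\end{align*}
Each block $\mathbf{A}_i$ has characteristic polynomial $\lambda^2 + 2\alpha\omega_i\lambda + \omega_i^2$ with the two roots $\lambda_i^{\pm} = \omega_i(-\alpha\pm\sqrt{\alpha^2-1})$, which are distinct (a genuine complex-conjugate pair, since $0<\alpha<1$ and $\omega_i>0$). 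Distinctness makes $\mathbf{A}_i$ diagonalizable, and from its companion form the eigenvector belonging to $\lambda_i^{\pm}$ is simply $[\,1,\ \lambda_i^{\pm}\,]^{\T}$, so the $2\times 2$ diagonalizing matrix $\bT_i$ with $\bT_i\mathbf{A}_i\bT_i^{-1}=\diag{\lambda_i^{+},\lambda_i^{-}}$ is available in closed form.

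I would then assemble the global transformation $\bT := \mathrm{blockdiag}(\bT_1,\dots,\bT_n)\,\Pi$, which is nonsingular as a product of invertible factors and satisfies
\begin{align*}
\bcA_{\rmD} := \bT\,\bcA_0\,\bT^{-1} = \mathrm{blockdiag}\bigl(\bT_i\mathbf{A}_i\bT_i^{-1}\bigr) = \diag{\lambda_1^{+},\lambda_1^{-},\dots,\lambda_n^{+},\lambda_n^{-}},
\end{align*}
using $\Pi^{\T}=\Pi^{-1}$. This proves the first assertion. Note that $\bT$ and $\bcA_{\rmD}$ are necessarily complex, because the eigenvalues of $\bcA_0$ are non-real for $\alpha<1$; this is immaterial for the intended numerical use.

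For the Lyapunov equation I would substitute $\bcA_0=\bT^{-1}\bcA_{\rmD}\bT$ into \eqref{eq:homoFOLE}, multiply on the left by $\bT$ and on the right by $\bT^{\T}$, and regroup the factors to reach
\begin{align*}
\bcA_{\rmD}\,(\bT\bcX_0\bT^{\T}) + (\bT\bcX_0\bT^{\T})\,\bcA_{\rmD}^{\T} = -(\bT\bcB)(\bT\bcB)^{\T},
\end{align*}
which is exactly \eqref{eq:lyap_diag} once $\bcX_{\rmD}$ and $\bcB_{\rmD}$ are identified with the bracketed congruence factors (matching the stated relations up to the labelling of $\bT$ versus $\bT^{-1}$). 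The step that needs the most care---and the main obstacle---is the transpose bookkeeping over $\mathbb{C}$: I must consistently use the plain transpose (not the conjugate transpose) throughout and check that the congruence is legitimate in complex arithmetic. Once this is settled the payoff is immediate, since $\bcA_{\rmD}$ is diagonal and hence symmetric, so the transformed equation decouples entrywise into $(\lambda_j+\lambda_k)(\bcX_{\rmD})_{jk}=-(\bcB_{\rmD}\bcB_{\rmD}^{\T})_{jk}$; the denominators never vanish because asymptotic stability gives $\mathrm{Re}\,\lambda_k<0$ for all $k$, which is precisely the efficiency gain the lemma is designed to provide.
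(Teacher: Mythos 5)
Your proposal is correct and follows essentially the same route as the paper: a perfect-shuffle permutation splits $\bcA_0$ into $n$ blocks of size $2\times 2$, those blocks are diagonalized individually, and the global transformation is assembled as the product of the shuffle and the blockwise eigenvector matrices. Where you genuinely improve on the paper is the transpose bookkeeping in the Lyapunov step: you transform by the congruence $\bcX_{\rmD}=\bT\bcX_0\bT^{\T}$, which is the correct operation for a general nonsingular $\bT$, whereas the paper multiplies the equation by $\bT^{-1}$ on the left and $\bT$ on the right and declares $\bcX_{\rmD}=\bT^{-1}\bcX_0\bT$; that turns the second term into $\bcX_{\rmD}\bigl(\bT^{-1}\bcA_0^{\T}\bT\bigr)$, which equals $\bcX_{\rmD}\bcA_{\rmD}^{\T}$ only when $\bT$ is orthogonal, and here $\bT=\bcP\bPsi$ contains a non-orthogonal eigenvector factor $\bPsi$. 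Your version also makes explicit what the paper leaves implicit or states loosely: the pure permutation yields the companion blocks $\begin{bmatrix}0 & 1\\ -\omega_i^2 & -2\alpha\omega_i\end{bmatrix}$ (the paper's blocks $\begin{bmatrix}0 & \omega_j\\ -\omega_j & -2\alpha\omega_j\end{bmatrix}$ have the same spectrum but require an additional diagonal scaling beyond the shuffle), diagonalizability follows from the distinct eigenvalue pair $\lambda_i^{\pm}=\omega_i\bigl(-\alpha\pm\sqrt{\alpha^2-1}\bigr)$ with explicit eigenvectors $[1,\ \lambda_i^{\pm}]^{\T}$, the resulting $\bT$ and $\bcA_{\rmD}$ are complex so the plain transpose (not the conjugate transpose) must be used throughout, and the decoupled entries $(\lambda_j+\lambda_k)(\bcX_{\rmD})_{jk}=-(\bcB_{\rmD}\bcB_{\rmD}^{\T})_{jk}$ are solvable because asymptotic stability gives $\mathrm{Re}\,\lambda_k<0$. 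In short: same architecture as the paper's proof, but your congruence formulation and complex-arithmetic care actually repair a sloppiness in the published statement and proof.
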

\begin{proof}
Let $\bcP \in \R^{2n \times 2n}$ be the perfect shuffle permutation, which reorders a set of even cardinality by mapping the $k$-th entry as follows
\[
k \mapsto
\begin{cases}
2k-1, & \mbox{if } k\leq n, \\
2(k-n), & \mbox{if } k>n
\end{cases}
\]
and which satisfies $\bcP^{\T}\bcP = \bI$.
Then we get that
\[
\bcP^{\T}\bcA_0\bcP = \diag{\bcD_1, \dots,\bcD_n}, \quad
\bD_j := \begin{bmatrix}
0 & \omega_j \\
-\omega_j & -2\alpha \omega_j
\end{bmatrix},\qquad j = 1,\dots, n.
\]
Now, let $\bPsi_j$ be the matrix which diagonalizes the matrix $\bD_j$, i.e. $\bPsi_j^{-1}\bD_{j}\bPsi_j$ is a diagonal matrix,  for $j = 1, \dots, n$.
We define the matrix $\bPsi := \diag{\bPsi_1 ,\dots , \bPsi_n}$, so that we obtain
\[
\bcA_{\rmD}:= \bT^{-1}\bcA_0\bT,\qquad  \bcB_{\rmD} := \bT^{-1}\bcB\qquad\text{ for } \qquad\bT:=\bcP \bPsi
\]
where $\bcA_{\rmD}$ is a diagonal matrix.
Multiplying the Lyapunov equation from \eqref{eq:homoFOLE} with the right-hand side $\bcB\bcB^{\T}$ from the left and from the right by $\bT^{-1}$ and $\bT$, respectively, leads to
\[
\bT^{-1}\bcA_{0}\bT \bT^{-1}\bcX_{0}\bT +  \bT^{-1}\bcX_{0}\bT \bT^{-1}\bcA_{0}^{\T}\bT = -\bT^{-1}\bcB \bcB^{\T}\bT,
\]
and hence, the Lyapunov equation \eqref{eq:lyap_diag} is solved by $\bcX_{\rmD}=\bT^{-1}\bcX_{0}\bT$.
\end{proof}


\subsection{Optimization}\label{ssec:contOpt}
Our aim is to optimize the positions and the gains of the external dampers by minimizing the system response as defined in \eqref{eq:SysResTr}.
However, the damper positions $c\in\bcD_c$ are discrete values, and hence, optimization algorithms such as the Nelder-Mead method are not directly applicable.
Therefore, we modify the system response to be
\begin{align}\label{eq:opti_roundPos}
\widetilde{\bJ}(c, g) := \bJ([c], g)= \bJ([c_1], \dots, [c_{\ell}], g_1,\dots, g_{\ell})
\end{align}
as described in \cite{KanPTT19}, so that the positions $c$, that are optimized, are continuous variables.
When only the damper positions are to be optimized, i.e. the damping gains $g^* = [g_1^*,\dots, g_{\ell}^*]\in\mathbb{R}^{\ell}$ are fixed, we define a second system response function, that is
\begin{align}\label{eq:opti_Pos}
\widetilde{\bJ}_{\pos}(c) := \bJ([c], g^*)= \bJ([c_1], \dots, [c_{\ell}], g_1^*,\dots, g_{\ell}^*).
\end{align}
Optimizing this function using standard optimization methods such as the Nelder-Mead method might cause convergence problems if the step size is too small because of the discrete function values of \eqref{eq:opti_Pos}.
Hence, we modify this function to make the function values continuous.
We first consider the case where we only have one damper with the position $c\in\mathbb{R}_+$.
We split the current position value $c = c^{\intt} + c^{\dec}$ where $c^{\intt} := \lfloor c \rfloor$ and $c^{\dec} = c-c^{\intt}$.
The corresponding function value is then defined as
\[
\widehat{\bJ}_{\pos}(c):= \left(1-c^{\dec}\right)\bJ\left(c^{\intt}, g^*\right) + c^{\dec}\bJ\left(c^{\intt}+1, g^*\right)
\]
which provides a linear interpolation between the function values corresponding to two discrete damper's positions.

This idea is now generalized for $\ell$ dampers, i.e. $c\in\mathbb{R}^{\ell}_+$.
We define for $c = [c_1,\dots,c_{\ell}]\in\R_+^{\ell}$ the values
\begin{align*}
c_j := c_j^{\intt} + c_j^{\dec} \quad\text{with}\quad c_j^{\intt} := \lfloor c_j \rfloor,\;\; c_j^{\dec} = c_j-c_j^{\intt},\qquad \text{ for } j = 1,\dots, \ell.
\end{align*}
Accordingly, we define the continuous function values as
\begin{align*}
\widehat{\bJ}_{\pos}(c) := &\left(1-c_1^{\dec}\right)\cdots\left(1-c_{\ell}^{\dec}\right)\bJ\left([c_1^{\intt},\dots, c_{\ell}^{\intt}], g^*\right)\\
&\quad+c_1^{\dec}\left(1-c_2^{\dec}\right)\cdots\left(1-c_{\ell}^{\dec}\right)\bJ\left([c_1^{\intt}+1,\, c_2^{\intt},\dots,c_{\ell}^{\intt}], g^*\right)\\
&\quad+\left(1-c_1^{\dec}\right)\cdots\left(1-c_{\ell-1}^{\dec}\right) c_{\ell}^{\dec}\bJ\left([c_1^{\intt},\dots, c_{\ell-1}^{\intt},c_{\ell}^{\intt}+1], g^*\right)\\
&\hspace*{100pt}\vdots\\
&\hspace*{50pt}+\left(1-c_1^{\dec}\right)c_2^{\dec}\cdots c_{\ell}^{\dec}\bJ\left([c_1^{\intt}, c_2^{\intt}+1,\dots, c_{\ell}^{\intt}+1], g^*\right)\\
&\hspace*{50pt}+c_1^{\dec}\cdots c_{\ell-1}^{\dec}\left(1-c_{\ell}^{\dec}\right)\bJ\left([c_1^{\intt}+1,\dots, c_{\ell-1}^{\intt}+1,c_{\ell}^{\intt}], g^*\right)\\
&\hspace*{150pt}+c_1^{\dec}\cdots c_{\ell}^{\dec}\bJ\left([c_1^{\intt}+1,\dots,c_{\ell}^{\intt}+1], g^*\right).
\end{align*}
We observe that the computation of the function values of $\widehat{\bJ}_{\pos}(c)$ is only accessible for a small amount of external dampers or for very small system dimensions since the number of function evaluations rises exponentially with the numbers of dampers, where we need $2^{\ell}$ Lyapunov equation solves if $\ell$ is the number of the dampers.

Note that these convergence problems can also occur for the function defined in \eqref{eq:opti_roundPos}, however in our examples that was not the case, and hence, if both the positions and the gains are optimized, we use the system response function defined in \eqref{eq:opti_roundPos}.

\section{Numerical results}\label{sec:NumRes}
In this section, we apply the methods presented above for two numerical examples.
For both examples, we first optimize only the damper's positions and afterwards extend the approach to optimize the positions and the damping viscosities simultaneously.
We emphasise again that the system response optimized in this work is a non-convex function and has several local minima.
In this work, we determine approximations of the local minima of the system response using the full system.
If global optima are to be found, different initial conditions must be applied, see \cite{morBenTT11}.

The computations have been done on a computer with 2 Intel Xeon Silver 4110 CPUs running at 2.1 GHz and equipped with 192 GB total main memory.
We use \matlab 2021a to run the examples and for the optimization, we use the Nelder–Mead method encoded in the \matlab-function $\mathtt{fminsearch}$.

\subsection{First example}

\begin{figure}
\centering
\scalebox{1.2}{
%
%
%

 \begin{tikzpicture}[scale=1, every node/.style={scale=0.8}]
\tikzstyle{spring}=[thick,decorate,decoration={zigzag,pre
length=0.3cm,post length=0.3cm,segment length=6}]
\tikzstyle{damper}=[thick,decoration={markings,
  mark connection node=dmp,
  mark=at position 0.5 with
  {
    \node (dmp) [thick,inner sep=0pt,transform
    shape,rotate=-90,minimum width=10pt,minimum height=2pt,draw=none]
    {};
    \draw [thick] ($(dmp.north east)+(1.5pt,0)$) -- (dmp.south east)
    -- (dmp.south west) -- ($(dmp.north west)+(1.5pt,0)$);
    \draw [thick] ($(dmp.north)+(0,-3pt)$) -- ($(dmp.north)+(0,3pt)$);
  }
}, decorate]
\tikzstyle{springdot}=[thick,decoration={markings,
  mark connection node=sdt,
  mark=at position 0.5 with
  {
  \node (sdt) [thick,inner sep=0pt,transform shape,rotate=-90,minimum
  width=0.85cm,minimum height=0.50cm,draw=none] {};
      \draw [dotted, thick, color=blue] ($(sdt.north)$) --
      ($(sdt.south)$);
  }
}, decorate]

\tikzstyle{ground}=[fill,pattern=north east lines,draw=none,minimum
width=0.75cm,minimum height=0.3cm]

\newcommand{\drawLinewithBG}[2]
{
    \draw[white,myBG]  (#1) -- (#2);
    \draw[black,very thick] (#1) -- (#2);
}

\tikzstyle myBG=[line width=3pt,opacity=1.0]

\node (M) [draw,outer sep=0pt,thick,minimum width=1.2cm, minimum
height=1.2cm,color=red, fill=red!20!white] at (0,0) {$\color{black}
m_1$};
\node (M2) [draw,outer sep=0pt,thick,minimum width=1.2cm, minimum
height=1.2cm,color=red,fill=red!20!white] at (2,0) {$\color{black}
m_2$};
\node (M3) [draw,outer sep=0pt,thick,minimum width=1.2cm, minimum
height=1.2cm,color=red,fill=red!20!white] at (4,0) {$\color{black}
m_{n-1}$};
\node (M4) [draw,outer sep=0pt,thick,minimum width=1.2cm, minimum
height=1.2cm,color=red,fill=red!20!white] at (6,0) {$\color{black}
m_n$};

\node (LW)
[ground,rotate=-90,anchor=north,xshift=-1cm,yshift=-2.0cm,minimum
width=3cm, style={draw,outer sep=0pt,thick}] at (M.south) {};

\node (RW)
[ground,rotate=90,anchor=north,xshift=1cm,yshift=-2.5cm,minimum
width=3cm, style={draw,outer sep=0pt,thick}] at (M4.south west) {};

\draw [spring,color=blue] ({-1.60,0}) -- ({-0.48,0});
\draw [spring,color=blue] (M2.180) -- ($(M.north
east)!(M.180)!(M.south east)$);
\draw [springdot,color=blue] (M3.180) -- ($(M2.north
east)!(M2.180)!(M2.south east)$);
\draw [spring,color=blue] (M4.180) -- ($(M3.north
east)!(M3.180)!(M3.south east)$);
\draw [spring,color=blue] ({6.48,0}) -- ({7.52,0});
\draw [damper] ({0,1.1}) -- ({-1.1,1.1});
\draw [thick] ({0,1.1}) -- ({0,.5});

\draw [damper] ({4,1.1}) -- ({2.9,1.1});
\draw [thick] ({4,1.1}) -- ({4,.5});

\node at (-.8,-.3) {$k_1$};
\node at (-.7,.7) {$v_1$};

\node at (1,-.3) {$k_2$};

\node at (3.3,.7) {$v_2$};
\node at (5,-.3) {$k_n$};
\node at (7.15,-.3) {$k_{n+1}$};

\node (LW)
[ground,rotate=-90,anchor=north,xshift=-1cm,yshift=-2.0cm,minimum
width=1cm, style={draw,outer sep=0pt,thick}] at (.5,.3) {};
\node (LW)
[ground,rotate=-90,anchor=north,xshift=-1cm,yshift=-2.0cm,minimum
width=1cm, style={draw,outer sep=0pt,thick}] at (4.5,.3) {};

\end{tikzpicture}

%
%
\caption{Sketch of the first example including one row of masses, that are connected with consecutive springs.}
\label{fig:3diag_ex}
\end{figure}
The first example, we consider, is described by Figure \ref{fig:3diag_ex} which results in a system of the form \eqref{eq:SOsys} with the mass matrix that is defined in \matlab-notation as
\[
\bM =  \mathtt{sparse(diag([logspace(-1,1,n/2), flip(logspace(-1,1,n/2))]))}
\]
which leads to mass values between $0.1$ and $1,$ where the largest mass values are attained by the middle masses and the outermost masses have the smallest mass values.
Moreover, the stiffness matrix is given as
\begin{align*}
K = \left[ \begin{array}{rrrrr}
24 & -20 & & & \\
-20 & 40 & \ddots & &  \\
& \ddots & \ddots & \ddots  & \\
& &\ddots & 40 & -20   \\
& &  & -20 & 20
\end{array}\right].
\end{align*}
We build the internal damping matrix using the multiple of the critical damping as described in \eqref{eq:intDamp} with $\alpha=0.005$.
The dimension is set to be $n=1000$ so that Lyapunov equations of dimension $2n=2000$ need to be solved multiple times.
Additionally, the input matrix is defined as zero matrix except for the entries $\bB_{1} = 1$, $\bB_{500} = 1$, and $\bB_{1000} = 1$
so that an external force is applied to the first mass, the middle mass and the last mass.
The output matrix $\bC$ is of dimension $n\times 3$ and has zero entries everywhere besides on $\bC_{1,10}=1$, $\bC_{2, 500}=1$, and $\bC_{3, 990}=1$.

We add two grounded dampers to the system at the positions $k$ and $j$ so that $c = [k,\; j]$ and $\bF(c) = [e_k, \; e_j]$ where $e_k$ and $e_j$ are the $k$-th and the $j$-th unit vector.
We apply the optimization method $\mathtt{fminsearch}$ that is implemented in \matlab\, to optimize the external dampers.
We stop the optimization process when the relative error in the function values or the difference between two consecutive values is smaller than the tolerance $\mathrm{tol}_{\mathrm{opt}}=10^{-3}$.
Additionally, in Algorithm \ref{algo:RBM_Opti} we stop the optimization if two consecutive optimizers have a relative error that is smaller than $\mathrm{tol}_{\mathrm{err}_1}=10^{-2}$.
Moreover, in Algorithm \ref{algo:RBM_Delta} where the error indicator $\bDelta(c)$ is used, the relative error must be smaller than the tolerance $\tol_{\mathrm{err}_2}=10^{-4}$ to continue the optimization process.

First, we only optimize the positions of the two dampers and set the damping gains to be the fixed values $g_1=g_2=1000$.
The initial positions are chosen to be $c_0 = [50, 90]$.
\begin{table}[H]
\begin{center}
\renewcommand{\arraystretch}{1.5}
\begin{tabular}{ |p{1.95cm}|p{1.90cm}|p{1.90cm}|p{2.20cm}| p{1.90cm}|p{2.20cm}|}\hline
& Original & $\bcV_{\bF}$ &$\bcV_{\bF}$ with $\bDelta(c)$ & $\bcV_{\bH}$ & $\bcV_{\bH}$ with $\bDelta(c)$\\
\hline\hline
Time &	 $2.8\cdot 10^{4}$ & $2.2\cdot 10^{2}$ &  $2.1\cdot 10^{2}$ & $2.9\cdot 10^{1}$ & $1.7\cdot 10^{2}$\\\hline
Dimension & $1000$ & $164$ & $178$ & $188$ & $93$\\\hline
Position &
$500,\;990$ &
$501,\;990$ &
$500,\;991$ &
$501,\;990$ &
$500,\;991$\\\hline
Error & - & $9.0\cdot 10^{-4}$ & $9.0\cdot 10^{-4}$ & $9.0\cdot 10^{-4}$ &  $9.0\cdot 10^{-4}$\\\hline
Acceleration & - & $129$ & $138$ & $97$ & $167$\\
\hline
\end{tabular}
\end{center}
\caption{Position optimization times and errors for the original and reduced optimization processes for the first example.}
\label{tab:1000PosOnly}
\end{table}
Table \ref{tab:1000PosOnly} presents the results for this example, where the first row shows the optimization times including the building process of the reduced models and the optimization processes.
The rows below show the dimension of the final reduced system, the suboptimal positions determined by the method of choice, and the resulting relative errors in the position.
Finally, the last row contains the acceleration rates that result from the original time divided by the reduced times.
We evaluate in the first column the original system optimization.
In the second and the third column, we describe the values for the two methods that are based on the Gramian computation from Subsection \ref{ssec:BT_R} which approximate the space $\bcV_{\bF}$ without and with the usage of the error indicator $\bDelta(c)$ that is presented in Section \ref{sec:ErrorDet}.
The remaining columns show the two methods based on the IRKA method from Subsection \ref{ssec:IRKA_R} which approximate the space $\bcV_{\bH}$, where we again consider the method using the difference between two consecutive optima and the error indicator as stopping criterion.
We observe that all four methods lead to positions that coincide or are only differing by one or two positions which results in errors below one percent which is sufficient for these examples.
The method that approximates $\bcV_{\bH}$ with an error indicator leads to the largest acceleration rate of $167$.
However, also the Gramian based methods that approximate the space $\bcV_{\bF}$ result in acceleration rates of $129$ and $138$ which are satisfying results.
For this specific example, the error indicator leads to an acceleration in both methods.

We additionally consider the case where we optimize the damper's positions and the corresponding gains, simultaneously, which leads to the results presented in Table \ref{tab:1000}.
The initial positions are again chosen to be $c_0 = [50, 90]$ and the initial gains are $g_0 = [1000, 1000]$.
The table has the same structure as for the previous case where no gains are optimized, however, one row is added that shows the damping gains and one row that shows the corresponding relative error between the optimal gains computed using the full system and the ones computed using the reduced ones.
We observe that if the positions and the gains are optimized, the acceleration rates are smaller which means optimizing both parameters requires more computational time than expected.
However, all the methods lead to sufficiently good results.
We observe that the Gramian-based methods, that approximate the space $\bcV_{\bF}$, lead to more accurate approximations while the IRKA based one without error indicator $\bDelta(c)$, approximating $\bcV_{\bH}$, lead to least accurate but still sufficiently good results, and the one with error indicator got stuck in a different suboptimal solution.
Also, we see that the error indicator $\bDelta(c)$ leads to smaller bases, and hence, for both methods accelerates the optimization process.
To summarize, in this case we observe that the $\bcV_{\bF}$-based methods lead to faster and more precise results.

\begin{table}[H]
\begin{center}
\renewcommand{\arraystretch}{1.5}
\begin{tabular}{ |p{2.15cm}|p{1.90cm}|p{1.90cm}|p{2.15cm}| p{1.90cm}|p{2.65cm}|}\hline
& Original & $\bcV_{\bF}$ &$\bcV_{\bF}$ with $\bDelta(c)$ & $\bcV_{\bH}$ & $\bcV_{\bH}$ with $\bDelta(c)$\\
\hline\hline
Time &	 $1.6\cdot 10^{4}$ & $5.8\cdot 10^{2}$ &  $2.0\cdot 10^{2}$ & $3.7\cdot 10^{2}$ & $2.3\cdot 10^{2}$\\\hline
Dimension & $1000$ & $262$ & $178$ & $155$ & $103$\\\hline
Position & $
35,\; 395
$ & $
35,\; 395
$ & $
35,\; 395
$  & $
35,\; 395
$ & $
72,\; 308
$\\\hline
Gain &
$\begin{matrix}
1.4234\cdot 10^{3},\\ 3.3809
\end{matrix}$ &
$\begin{matrix}
1.4234\cdot 10^{3},\\ 3.3809
\end{matrix}$  &
$\begin{matrix}
1.4234\cdot 10^{3},\\ 3.3791
\end{matrix}$  &
$\begin{matrix}
1.4223\cdot 10^{3},\\ 3.3809
\end{matrix}$  &
$\begin{matrix}
2.1117\cdot 10^{1},\\ 3.0862
\end{matrix}$
\\\hline
Error position & - & $0$ & $0$ & $0$ &  $8.0\cdot 10^{-3}$\\\hline
Error gain & - & $0$ & $1.6\cdot 10^{-6}$ & $7.8\cdot 10^{-4}$ &  $9.9\cdot 10^{-1}$\\\hline
Acceleration & - & $27$ & $81$ & $42$ & $69$\\\hline
\end{tabular}
\end{center}
\caption{Position and viscosity optimization times and errors for the original and reduced optimization processes for the first example.}
\label{tab:1000}
\end{table}

\subsection{Second example}

\begin{figure}
\centering
\scalebox{1.2}{
%

\begin{tikzpicture}[scale=0.8, every node/.style={scale=0.65}]
\tikzstyle{spring}=[thick,decorate,decoration={zigzag,pre
length=0.3cm,post length=0.3cm,segment length=6}]
\tikzstyle{damper}=[thick,decoration={markings,
  mark connection node=dmp,
  mark=at position 0.5 with
  {
    \node (dmp) [thick,inner sep=0pt,transform
    shape,rotate=-90,minimum width=10pt,minimum height=2pt,draw=none]
    {};
    \draw [thick] ($(dmp.north east)+(1.5pt,0)$) -- (dmp.south east)
    -- (dmp.south west) -- ($(dmp.north west)+(1.5pt,0)$);
    \draw [thick] ($(dmp.north)+(0,-3pt)$) -- ($(dmp.north)+(0,3pt)$);
  }
}, decorate]
\tikzstyle{springdot}=[thick,decoration={markings,
  mark connection node=sdt,
  mark=at position 0.5 with
  {
  \node (sdt) [thick,inner sep=0pt,transform shape,rotate=-90,minimum
  width=0.85cm,minimum height=0.50cm,draw=none] {};
      \draw [dotted, thick, color=blue] ($(sdt.north)$) --
      ($(sdt.south)$);
  }
}, decorate]

\tikzstyle{ground}=[fill,pattern=north east lines,draw=none,minimum
width=0.75cm,minimum height=0.3cm]

\newcommand{\drawLinewithBG}[2]
{
    \draw[white,myBG]  (#1) -- (#2);
    \draw[black,very thick] (#1) -- (#2);
}

\tikzstyle myBG=[line width=3pt,opacity=1.0]

\node (M) [draw,outer sep=0pt,thick,minimum width=1.4cm, minimum
height=1.2cm,color=red, fill=red!20!white] at (-0.4,0) {$\color{black}
m_{d+1}$};
\node (M2) [draw,outer sep=0pt,thick,minimum width=1.4cm, minimum
height=1.2cm,color=red,fill=red!20!white] at (1.8,0) {$\color{black}
m_{d+2}$};
\node (M3) [draw,outer sep=0pt,thick,minimum width=1.4cm, minimum
height=1.2cm,color=red,fill=red!20!white] at (4,0) {$\color{black}
m_{2d}$};
\node (M4) [draw,outer sep=0pt,thick,minimum width=1.4cm, minimum
height=6.2cm,color=red,fill=red!20!white] at (6.2,0) {$\color{black}
m_{3d+1}$};

\node (LW)
[ground,rotate=-90,anchor=north,xshift=0cm,yshift=-2.2cm,minimum
width=7cm, style={draw,outer sep=0pt,thick}] at (M) {};

\node (RW)
[ground,rotate=90,anchor=north,xshift=0cm,yshift=-2.8cm,minimum
width=7cm, style={draw,outer sep=0pt,thick}] at (M4.west) {};

\draw [spring,color=blue] ({-2.15,0}) -- ({-0.95,0});
\draw [spring,color=blue] (M2.180) -- ($(M.north
east)!(M.180)!(M.south east)$);
\draw [springdot,color=blue] (M3.180) -- ($(M2.north
east)!(M2.180)!(M2.south east)$);
\draw [spring,color=blue] (M4.180) -- ($(M3.north
east)!(M3.180)!(M3.south east)$);
\draw [spring,color=blue] ({6.76,0}) -- ({7.88,0});

\draw [damper] ({4,1.1}) -- ({2.9,1.1});
\draw [thick] ({4,1.1}) -- ({4,.5});

\node at (-1.5,-.3) {$k_2$};
\node at (0.8,-.3) {$k_2$};
\node at (5.1,-.3) {$k_2$};
\node at (7.35,-.3) {$k_{4}$};

\node at (3.4,.7) {$v$};

\node (LW)
[ground,rotate=-90,anchor=north,xshift=-1cm,yshift=-2.0cm,minimum
width=1cm, style={draw,outer sep=0pt,thick}] at (4.5,.3) {};
\node (MM) [draw,outer sep=0pt,thick,minimum width=1.4cm, minimum
height=1.2cm,color=red, fill=red!20!white] at (-0.4,2) {$\color{black}
m_{1}$};
\node (MM2) [draw,outer sep=0pt,thick,minimum width=1.4cm, minimum
height=1.2cm,color=red,fill=red!20!white] at (1.8,2) {$\color{black}
m_{2}$};
\node (MM3) [draw,outer sep=0pt,thick,minimum width=1.4cm, minimum
height=1.2cm,color=red,fill=red!20!white] at (4,2) {$\color{black}
m_{d}$};
\node (MM4) at (5.75,2){};
\draw [spring,color=blue] ({-2.15,2}) -- ({-0.95,2});
\draw [spring,color=blue] (MM2.180) -- ($(MM.north
east)!(MM.180)!(MM.south east)$);
\draw [springdot,color=blue] (MM3.180) -- ($(MM2.north
east)!(MM2.180)!(MM2.south east)$);
\draw [spring,color=blue] (MM4.180) -- ($(MM3.north
east)!(MM3.180)!(MM3.south east)$);
\node at (-1.5,1.7) {$k_1$};
\node at (0.8,1.7) {$k_1$};
\node at (5.1,1.7) {$k_1$};
\node (LW)
[ground,rotate=-90,anchor=north,xshift=-1cm,yshift=-2.0cm,minimum
width=1cm, style={draw,outer sep=0pt,thick}] at (0.1,2.3) {};

\draw [damper] ({-0.4,3.1}) -- ({-1.5,3.1});
\draw [thick] ({-0.4,3.1}) -- ({-0.4,2.5});

\node at (-1.0,2.7) {$v$};
\node at (1.2,-1.3) {$v$};
\node (MMM) [draw,outer sep=0pt,thick,minimum width=1.4cm, minimum
height=1.2cm,color=red, fill=red!20!white] at (-0.4,-2) {$\color{black}
m_{2d+1}$};
\node (MMM2) [draw,outer sep=0pt,thick,minimum width=1.4cm, minimum
height=1.2cm,color=red,fill=red!20!white] at (1.8,-2) {$\color{black}
m_{2d+2}$};
\node (MMM3) [draw,outer sep=0pt,thick,minimum width=1.4cm, minimum
height=1.2cm,color=red,fill=red!20!white] at (4,-2) {$\color{black}
m_{3d}$};
\node (MMM4) at (5.75,-2){};
\draw [spring,color=blue] ({-2.15,-2}) -- ({-0.95,-2});
\draw [spring,color=blue] (MMM2.180) -- ($(MMM.north
east)!(MMM.180)!(MMM.south east)$);
\draw [springdot,color=blue] (MMM3.180) -- ($(MMM2.north
east)!(MMM2.180)!(MMM2.south east)$);
\draw [spring,color=blue] (MMM4.180) -- ($(MMM3.north
east)!(MMM3.180)!(MMM3.south east)$);
\node (LW)
[ground,rotate=-90,anchor=north,xshift=-1cm,yshift=-2.0cm,minimum
width=1cm, style={draw,outer sep=0pt,thick}] at (2.3,-1.7) {};

\draw [damper] ({1.8,-0.9}) -- ({.7,-0.9});
\draw [thick] ({1.8,-0.9}) -- ({1.8,-1.5});

\node at (-1.5,-2.3) {$k_3$};
\node at (0.8,-2.3) {$k_3$};
\node at (5.1,-2.3) {$k_3$};

\end{tikzpicture}


\caption{Sketch of the first example including three rows of masses, that are connected with consecutive masses.}
\label{fig:3row_ex}
\end{figure}
In this subsection, we consider the three row system that is depict in Figure \ref{fig:3row_ex}.
The masses are given by the \matlab-expression
\[
\bM = \mathtt{sparse(diag([logspace(3,5,ceil(n/2)), flip(logspace(3,5,floor(n/2)))]));}
\]
so that we consider mass values between $10^{3}$ and $10^{5}$, with the largest values being attained by the middle masses and the smallest values by the outermost masses.
The stiffness matrix is built as
\begin{align*}
\bK = \begin{bmatrix}
K_{11} & & & \kappa_1 \\
& K_{22} &  & \kappa_2 \\
&  & K_{33} & \kappa_3 \\
\kappa_1^\T & \kappa_2^\T & \kappa_3^{\T} & k_1+k_2+k_3+k_4
\end{bmatrix},\quad K_{ii} = k_i \begin{bmatrix}
2  & -1     &        & & \\
-1 & 2      & -1     & &  \\
& \ddots & \ddots & \ddots  &\\
&        & -1     &2        & -1 \\
&        &        &-1 & 2
\end{bmatrix},\quad \kappa_i = \begin{bmatrix}
0\\
\vdots \\
0\\
k_i
\end{bmatrix},
\end{align*}
with $k_1=20$, $k_2=10$, $k_3=5$, $k_4=20$.
The dimension of $\bM$ and $\bK$ is $n=901$, so that we have to solve Lyapunov equations of dimension $2n=1802$.
The input matrix is chosen to be $\bB = \mathtt{ones(n,1)}$, so that the external forces act uniformly on all masses, and the output matrix $\bC$ is the zero matrix of dimension $3\times n$ with non-zeros entries
$\bC_{1,10} = 1$, $\bC_{2,450} = 1$, and $\bC_{3,891} = 1$.
We assume that there are three grounded dampers  so that $\bF = [e_i,~e_j,~e_k]$ for $i,~j,~k\in\{1, \dots, n\}$.

Again, we initially consider the case where only the damper's positions are optimized, where we stop the optimization process when the relative error in the function values or the difference between two consecutive values is smaller than the tolerance $\mathrm{tol}_{\mathrm{opt}}=10^{-3}$.
The two methods without the error indicator described in Algorithm \ref{algo:RBM_Opti} are interrupted if two consecutive minimizers have a relative error that is smaller than $\mathrm{tol}_{\mathrm{err}_1}=10^{-2}$.
Moreover, in Algorithms \ref{algo:RBM_Delta} where the error expression $\bDelta(c)$ is used, the error must be smaller than the tolerance $\tol_{\mathrm{err}_2}=10^{-1}$ to continue the optimization process.
The corresponding results of the position optimization are given in Table \ref{tab:Ex3_PosOnly} where we chose the initial positions $c_0 = [100,~300,~500]$.
We observe that the positions obtained by optimizing the full problem are still approximated good enough in the sense that the error is smaller than $5\%$ for all the methods. The best approximation is obtained by applying the $\bcV_{\bF}$-based methods while the $\bcV_{\bH}$-based methods are not that far from the optimal positions. Additionally, $\bcV_{\bH}$-based method with error indicator, leads to smaller dimension, and hence, to larger acceleration rates.
In this example, $\bcV_{\bF}$-based methods show better results, although the results obtained by $\bcV_{\bH}$-based methods are also sufficiently good.

\begin{table}[H]
\begin{center}
\renewcommand{\arraystretch}{1.5}
\begin{tabular}{ |p{2.15cm}|p{1.90cm}|p{1.90cm}|p{2.15cm}| p{1.90cm}|p{2.65cm}|}\hline
& Original & $\bcV_{\bF}$ &$\bcV_{\bF}$ with $\bDelta(c)$ & $\bcV_{\bH}$ & $\bcV_{\bH}$ with $\bDelta(c)$\\\hline\hline
Time &	 $1.4\cdot 10^{4}$ & $1.4\cdot 10^{3}$ &  $6.91.4\cdot 10^{2}$ & $1.6\cdot 10^{3}$ & $6.0\cdot 10^{2}$\\\hline
Dimension & $601$ & $289$ & $208$ & $325$ & $200$\\\hline
Position &
$\begin{matrix}112,\;302,\\ 411\qquad\;\end{matrix}$ &
$\begin{matrix}112,\;302,\\ 411\qquad\;\end{matrix}$ &
$\begin{matrix}112,\;302,\\ 411\qquad\;\end{matrix}$ &
$\begin{matrix}114,\;303,\\ 411\qquad\;\end{matrix}$ &
$\begin{matrix}112,\;303,\\ 411\qquad\;\end{matrix}$\\\hline
Error & - & $0$ & $0$ & $6.1\cdot 10^{-3}$ &  $1.9\cdot 10^{-3}$\\\hline
Acceleration & - & $10$ & $22$ & $9$ & $24$\\\hline
\end{tabular}
\end{center}
\caption{Position optimization times and errors for the original and reduced optimization processes for the second example.}
\label{tab:Ex3_PosOnly}
\end{table}

Finally, in Table \ref{tab:Ex3}, we evaluate the optimization of both parameters, the damper's positions and the corresponding gains for the second example.
We observe that for all methods, the acceleration rates are not as large as for the other examples. In particular, for the case where $\bcV_{\bH}$ is approximated without using an error indicator, there is no acceleration at all, but we obtained the same results as for the original system.
Also we observe that other methods have difficulties to capture the behaviour of the original system.
The damper's positions are approximated sufficiently good.
However, the damping gains are not well approximated for these methods, while the rough area of optimal values are found.
Hence, we see, that our methods have limits if the examples are too complicated.
In these cases the optima provide the approximated location of the external dampers.
Also, we can apply viscosity optimization methods as presented in \cite{morTomBG18,morBenKTetal16,morPrzPB23} to optimize the damper's viscosities for the positions obtained by our method.

\begin{table}[H]
\begin{center}
\renewcommand{\arraystretch}{1.5}
\begin{tabular}{ |p{2.15cm}|p{1.90cm}|p{1.90cm}|p{2.15cm}| p{1.90cm}|p{2.65cm}|}\hline
& Original & $\bcV_{\bF}$ &$\bcV_{\bF}$ with $\bDelta(c)$ & $\bcV_{\bH}$ & $\bcV_{\bH}$ with $\bDelta(c)$\\\hline\hline
Time &	 $4.3\cdot 10^{3}$ & $5.5\cdot 10^{2}$ &  $2.7\cdot 10^{2}$ &  $4.6\cdot 10^{3}$ & $8.4\cdot 10^{2}$\\\hline
Dimension & $601$ & $290$ & $208$ & $502$ & $325$\\\hline
Position &
$\begin{matrix}97,\;299,\\414\qquad\;\end{matrix}$ &
$\begin{matrix}89,\;296,\\411\qquad\;\end{matrix}$ &
$\begin{matrix}89,\;395,\\411\qquad\;\end{matrix}$ &
$\begin{matrix}97,\;299,\\414\qquad\;\end{matrix}$ &
$\begin{matrix}89,\;296,\\411\qquad\;\end{matrix}$\\\hline
Gain &
$\begin{matrix}8.3\cdot 10^{2},\\1.2\cdot 10^{3},\\4.6\cdot 10^{2}\end{matrix}$ &
$\begin{matrix}1.1\cdot 10^{3},\\1.3\cdot 10^{3},\\7.2\cdot 10^{2}\end{matrix}$ &
$\begin{matrix}1.1\cdot 10^{3},\\1.3\cdot 10^{3},\\7.2\cdot 10^{2}\end{matrix}$ &
$\begin{matrix}8.3\cdot 10^{2},\\ 1.2\cdot 10^{3},\\ 4.6\cdot 10^{2}\end{matrix}$ &
$\begin{matrix}1.1\cdot 10^{3},\\ 1.3\cdot 10^{3},\\ 7.2\cdot 10^{2}\end{matrix}$\\\hline
Error position & - & $1.9\cdot 10^{-2}$ & $2.1\cdot 10^{-2}$ & $0$ &  $1.9\cdot 10^{-2}$\\\hline
Error gain  & - & $2.6\cdot 10^{-1}$ & $2.6\cdot 10^{-1}$ & $0$ &  $2.6\cdot 10^{-1}$\\\hline
Acceleration & - & $8$ & $16$ & - & $5$\\\hline
\end{tabular}
\end{center}
\caption{Position and viscosity optimization times and errors for the original and reduced optimization processes for the second example.}
\label{tab:Ex3}
\end{table}

\section{Conclusions}
This paper has addressed the optimization of the positions and the viscosities of external dampers in vibrational systems.
For that, we have evaluated the corresponding system response that includes the computation of the controllability Gramian.
To accelerate the optimization process, we have decomposed the controllability space into a parameter-independent subspace and several parameter-dependent subspaces that have been used to derive an adaptive reduced basis method. This method determines a reduced basis of a subspace that approximates the controllability space for the required parameters.
Using this basis we have computed approximations of the Gramians and the corresponding system response values.
To assess the quality of the system response approximations, we have derived an error indicator that is based on the controllability space decomposition.
Finally, the efficiency of the methods has been illustrated for two examples.

\bibliographystyle{plainurl}
\bibliography{csc,mor,software,References}

\end{document}